\newenvironment{customthm}[1]
  {\innercustomthm}
  {\endinnercustomthm}
\newtheorem{theorem}{Theorem}[section]
\newaliascnt{headcor}{headthm}
\newaliascnt{headconj}{headthm}
\newaliascnt{corollary}{theorem}
\newtheorem{corollary}[corollary]{Corollary}
\newaliascnt{claim}{theorem}
\newaliascnt{lemma}{theorem}
\newtheorem{lemma}[lemma]{Lemma}
\newaliascnt{conjecture}{theorem}
\newaliascnt{proposition}{theorem}
\newtheorem{proposition}[proposition]{Proposition}
\theoremstyle{definition}
\newaliascnt{definition}{theorem}
\newtheorem{definition}[definition]{Definition}
\newaliascnt{notation}{theorem}
\newtheorem{notation}[notation]{Notation}
\newaliascnt{example}{theorem}
\newaliascnt{examples}{theorem}
\newaliascnt{remark}{theorem}
\newtheorem{remark}[remark]{Remark}
\newaliascnt{question}{theorem}
\newaliascnt{questions}{theorem}
\newaliascnt{problem}{theorem}
\newaliascnt{construction}{theorem}
\newaliascnt{setup}{theorem}
\newaliascnt{algorithm}{theorem}
\newtheorem{algorithm}[algorithm]{Algorithm}
\newaliascnt{observation}{theorem}
\newaliascnt{defprop}{theorem}
\newtheorem*{acknowledgement}{Acknowledgement}
\newcommand{\Hilb}{\mathrm{Hilb}}
\newcommand{\Hom}{\mathrm{Hom}}
\newcommand{\Spec}{\mathrm{Spec}}
\newcommand{\Proj}{\mathrm{Proj}}
\newcommand{\OO}{\mathscr{O}}
\newcommand{\PP}{\mathbf{P}}
\newcommand{\mm}{\mathfrak{m}}
\newcommand{\GL}{\mathrm{GL}}
\newcommand{\e}{\boldsymbol{e}}
\newcommand{\f}{\boldsymbol{f}}
\newcommand{\kk}{\mathbf{k}}
\newcommand{\A}{\boldsymbol{\alpha}}
\newcommand{\x}{\boldsymbol{x}}
\DeclareFontFamily{OT1}{pzc}{}
\DeclareFontShape{OT1}{pzc}{m}{it}{<-> s * [1.100] pzcmi7t}{}
\DeclareMathAlphabet{\mathchanc}{OT1}{pzc}{m}{it}
\def\equationautorefname~#1\null{(#1)\null}
\def\sectionautorefname~#1\null{Section #1\null}
\def\subsectionautorefname~#1\null{\S #1\null}
\begin{document}
\author[Ritvik \,Ramkumar]{Ritvik~Ramkumar}
\address{(Ritvik Ramkumar) Department of Mathematics\\Cornell University\\ Ithaca, NY\\14850\\USA}
\email{ritvikr@cornell.edu}
\keywords{Hilbert Scheme, Singularities, Borel-fixed points, Deformations of ideals}
\subjclass[2010]{Primary: 13D02, 14C05, 14D22, 14J17}
\title{Hilbert schemes with two Borel-fixed points}
\maketitle
\begin{abstract} 
We characterize Hilbert polynomials that give rise to Hilbert schemes with two Borel-fixed points and determine when the associated Hilbert schemes or their irreducible components are smooth. In particular, we show that the Hilbert scheme is reduced and has at most two irreducible components. By describing the singularities in a neighbourhood of the Borel-fixed points, we prove that the irreducible components are Cohen-Macaulay and normal. We end by giving many examples of Hilbert schemes with three Borel-fixed points.
\end{abstract}

\setcounter{section}{-1}
\section{Introduction}

The Hilbert scheme $\Hilb^P(\PP^n)$ which parameterizes closed subschemes of $\mathbf{P}^n$ with a fixed Hilbert polynomial $P$ introduced by Grothendieck \cite{g} has attracted a lot of interest, but their global geometry is not well understood. The earliest results in this direction were obtained by Hartshorne \cite{HARTSHORNE_CONNECTED}, who showed that $\Hilb^P(\PP^n)$ is connected, and Fogarty \cite{FOGARTY}, who proved that $\Hilb^{P}(\PP^2)$ is smooth. Later on, Reeves-Stillman \cite{rs} showed that every Hilbert scheme of projective space contains a smooth Borel-fixed point. As a consequence, Hilbert schemes with a single Borel-fixed point are smooth and irreducible, and Staal \cite{STAAL} completely classified these Hilbert schemes. In fact, most other Hilbert schemes or components of Hilbert schemes that are very well understood have few Borel-fixed points. For example, the twisted cubic compactification $\Hilb^{3t+1}(\PP^n)$, which has two smooth components that meet transversely \cite{ps}, has three Borel-fixed points. Thus, by restricting the structure of the Borel-fixed points one might obtain many smooth or mildly singular (components of) Hilbert schemes. 
After the first version of this paper was available, Skjelnes-Smith \cite{SMOOTH_HILB} classified all smooth Hilbert schemes and described their geometry. 
Complementing \cite{SMOOTH_HILB}, our work may be seen as a first step towards a classification of mildly singular Hilbert schemes.

In this paper, we study Hilbert schemes with at most three Borel-fixed points. We classify Hilbert schemes with two Borel-fixed points, show that they are reduced and determine when they are irreducible or smooth. Using the tangent-obstruction theory for the Hilbert scheme  we show that the singularities that occur are cones over certain Segre embeddings of $\PP^a \times \PP^b$. 

To state our results we use the Gotzmann decomposition of a Hilbert polynomial \cite{GOTZMANN}. An \textit{integer partition} $\lambda$ is an $m$-tuple of positive integer $\lambda = (\lambda_1, \lambda_2, \dots, \lambda_m)$ satisfying  $\lambda_1 \geq \lambda_2 \geq \cdots \lambda_m \geq 1$. If $P$ is the Hilbert polynomial of some subscheme of $\PP^n$ then there exists an integer partition $\lambda$ such that
\begin{equation} \label{EXPANSION}
P = P_\lambda:= \sum_{i=1}^m \binom{t+\lambda_i - i}{\lambda_i-1}.
\end{equation}

\begin{customthm}{A}  \label{intoThm}
Let $\text{char}(\kk) = 0$. A Hilbert scheme $\Hilb^{P_\lambda}(\PP^n)$ with two Borel-fixed points is either smooth, irreducible and singular, or a union of two components.
\begin{itemize}[topsep=0pt,itemsep=1ex]
\item It is smooth when 
\begin{enumerate}[label=(\roman*)]
\item $\lambda = (n^s,1,1,1)$ with $n \geq 2$;
\item $\lambda = (2^s,1,1,1,1)$;
\item $\lambda = (n^2,2^q,1)$ with $n > 2$ and $q \geq 4$;
\item  $\lambda = (n^s,(d+1)^q,1)$ with $n > d+1 > 2$ and $q \geq 2$;
\end{enumerate} 
\item It is irreducible and singular when
\begin{enumerate}[label=(\roman*)] \setcounter{enumi}{4}
\item $\lambda=(n^s,(d+1)^q,r+1,1)$ with $n > d+1  > r +1 > 2$;
\item $\lambda=(n^s,(d+1)^q,2,1)$ with $n > d+1  > 2$ and $q \geq 3$.
\item $\lambda = (n^s,d+1,1,1)$ with $n > d+1 > 1$.
\end{enumerate}
\item It is a union of two components when
\begin{enumerate}[label=(\roman*)] \setcounter{enumi}{7}
\item $\lambda = (n^s,2,2,1)$ for $n \geq 3$;
\item $\lambda = (n^s,d+1,2,1)$ with $n > d+1 > 3$.
\end{enumerate}
\end{itemize} 
In all these cases, the irreducible components are normal and Cohen-Macaulay.
\end{customthm}

A more precise statement, including a description of the general member and the singularity type in each case, is given in \autoref{PROOF}. As an illustrative example, consider the partition when $\lambda = (2,1,1)$ in $\PP^3$, which is a special case of \textit{(vii)} with $s=0$. In this case the Hilbert scheme $\Hilb^{P_\lambda}(\PP^3)$  is Gorenstein with general member parameterizing a line union two isolated points.

After the first version of this paper was posted, work of Staal \cite{STAAL_2} shows that the classification of Hilbert schemes with two Borel-fixed points extends to positive characteristics with a minor modification. In particular, \cite[Theorem 1.1]{STAAL_2} states that for $\text{char}(\kk) \ne 2$ the Hilbert scheme $\Hilb^{P_\lambda}(\PP^n)$ has two Borel-fixed points if and only if $\lambda$ is as in one of the cases in \autoref{THEOREM_A}. If $\text{char}(\kk) = 2$ then $\lambda$ can be any of the cases of \autoref{THEOREM_A} except for case \textit{(ii)}. Since our deformation computations are characteristic independent (see \autoref{DeformationSection} and \autoref{PROOF}), we obtain a description of the singularities in all characteristics.


In the last section we collect various examples of Hilbert schemes with three Borel-fixed points that have appeared in the literature. In contrast with \autoref{THEOREM_A}, we show that these Hilbert schemes can have three irreducible components and that the components can meet each other in different ways.

There are many directions one can explore using the techniques developed in this paper. For instance, our methods in \autoref{classifybobo} can be used to feasibly classify Hilbert schemes with a small number of Borel-fixed points. In these cases, experiments in \cite{M2} suggest that the Borel-fixed points have mild singularities. Another idea would be to study deformations of a restricted class of Borel-fixed ideals. For example, one can consider classes of Borel-fixed ideals whose generators have similar properties to the generators of the ideals studied in this paper. In fact, the authors in \cite{LELLA_ROGGERO, CMR} develop a theory of marked schemes and use it to provide local equations around Borel-fixed ideals. In \cite{BCR} this is effectively applied to show that $\Hilb^{16}(\PP^7)$ has at least three irreducible components, and that the graded Gorenstein $\kk$-algebras of Hilbert function $(1,7,7,1)$ are smoothable.



\section{Preliminaries}
In this section we fix our notation and review some well known facts about Borel-fixed points.

\subsection{Notation}  Let $\kk$ be an algebraically closed field. We use $S$ to denote the polynomial ring $\kk[x_0,\dots,x_n]$ and $\mathfrak{m}:=(x_0\dots,x_n)$ to denote its maximal ideal. We denote the monomial $x_0^{\alpha_0}\cdots x_{n}^{\alpha_n}$ by $\boldsymbol{x}^{\boldsymbol{\alpha}}$. We use $S_d$ to denote the subspace generated by monomials of degree $d$. The \textit{support} of a monomial is the set of all variables that divide the monomial. By lexicographic ordering we will mean the standard lexicographic ordering on $S$ with $x_0 > x_1 > \cdots > x_n$.

All ideals are assumed to be saturated unless otherwise specified. We will use $[I]$ or $[X]$ to denote the $\kk$-point in the Hilbert scheme corresponding to $X = \Proj(S/I)$. We use $P_X(t)$ or $P_{S/I}(t)$ to denote the Hilbert polynomial of the subscheme $X= \Proj \, (S/I) \subseteq \mathbf{P}^n$. We sometimes call this the Hilbert polynomial of $I$.

We use $\lambda$ to denote the tuple $(\lambda_1, \lambda_2, \dots, \lambda_m)$ of weakly decreasing positive integers and call it an \textit{integer partition}. We use $P_\lambda$ to denote the Hilbert polynomial (Equation \autoref{EXPANSION}) associated to $\lambda$. 
Hilbert scheme are indexed by partitions $\lambda$ and we will do this implicitly by writing them as $\Hilb^{P_\lambda}(\PP^n)$. The dimension of the subscheme with Hilbert polynomial $P_\lambda$ is $\lambda_1-1$. In particular, if the closed subscheme is proper and non-empty we have $1 \leq \lambda_1 \leq n$. For more details we refer to \cite[\S 3]{SMOOTH_HILB}.

\subsection{Borel-fixed points} Given a matrix $A=(a_{ij})_{ij}\in \GL(n+1)$, the map on variables $x_{i}\mapsto\sum a_{ij}x_{j}$ induces an action on the set of ideals of $S$ with Hilbert polynomial $P(t)$. Thus, the group $\GL(n+1)$ acts on $\Hilb^P(\PP^n)$ and so does its subgroup, $\mathcal{B}$, of upper triangular matrices. A closed point (resp. ideal) is said to be \textit{Borel-fixed} if it is fixed by the subgroup $\mathcal{B}$.

\begin{lemma} \label{borel2} \label{borel} The Hilbert scheme $\Hilb^{P}(\PP^n)$ is reduced or smooth if and only if it is reduced or smooth at all the Borel-fixed points, respectively. Moreover, an integral component, $H$, of the Hilbert scheme is normal, Cohen-Macaulay, Gorenstein or smooth if and only if it is normal, Cohen-Macaulay, Gorenstein or smooth at all the Borel-fixed points on $H$, respectively.
\end{lemma}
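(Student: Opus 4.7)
The plan is to exploit the $\GL_{n+1}$-action on $\Hilb^{P(t)}\, \P^n$ together with Borel's fixed point theorem. The forward direction of both claims is trivial (if a property holds everywhere, it holds at every Borel fixed point), so I focus on the converse.

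The key observation is that each property in question---reducedness, non-singularity, normality, Cohen--Macaulay, and Gorenstein---is open on any Noetherian scheme of finite type over $k$. Consequently the corresponding ``bad locus'' $Z$ (the non-reduced, singular, non-normal, non-Cohen--Macaulay, or non-Gorenstein locus) is closed in $\Hilb^{P(t)}\, \P^n$. Since $\GL_{n+1}$ acts by scheme automorphisms and each of these properties is stalk-local, $Z$ is $\GL_{n+1}$-invariant; in particular it is $\mathcal{B}$-invariant.

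For the first assertion I would take $Z$ to be the non-reduced or the singular locus of the whole Hilbert scheme. Then $Z$ is a closed subscheme of the projective scheme $\Hilb^{P(t)}\, \P^n$, hence complete, and is preserved by the connected solvable group $\mathcal{B}$. Borel's fixed point theorem then yields a $\mathcal{B}$-fixed closed point in $Z$---i.e.\ a Borel fixed point---contradicting the hypothesis, so $Z=\emptyset$. For the component statement, I would first note that $\mathcal{B}$, being connected, cannot permute irreducible components, so it stabilizes each integral component $H$ (with its reduced induced structure). The bad locus for any of the listed properties inside $H$ is closed in $H$, hence projective, and $\mathcal{B}$-invariant, so the same Borel-fixed-point argument produces a Borel fixed point of $\Hilb^{P(t)}\, \P^n$ lying on $H$ unless that bad locus is empty.

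I do not expect a substantive obstacle: the argument is essentially packaging standard facts. The only items worth checking with care are (i) openness of each of the listed properties on a Noetherian scheme of finite type over a field, which is classical, and (ii) stability of the bad locus under $\mathcal{B}$, which is automatic because $\mathcal{B}$ acts by scheme automorphisms.
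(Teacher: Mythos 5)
Your argument is correct and is essentially the paper's: both rest on Borel's fixed point theorem together with openness of the reduced, non-singular, normal, Cohen--Macaulay and Gorenstein loci. The only cosmetic difference is that you apply the fixed point theorem to the (closed, $\mathcal{B}$-invariant) bad locus itself, while the paper applies it to the orbit closure $\overline{\mathcal{B}(Z)}$ of an arbitrary point and then uses openness to propagate the good property back to $Z$.
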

\begin{proof} 
Given a $\kk$-point $[Z] \in \Hilb^{P}(\PP^n)$, write $\mathcal{B}(Z)$ for the orbit of $Z$ under $\mathcal{B}$. By the Borel fixed-point theorem the closure, $\overline{\mathcal{B}(Z)}$, contains a Borel-fixed point. Assume that the Hilbert scheme is reduced at all the Borel-fixed points. Since the reduced locus is open, a non-empty open subset of $\overline{\mathcal{B}(Z)}$ is also reduced. Thus, some element of $\mathcal{B}(Z)$ is also non-reduced. Since $\mathcal{B}$ acts by automorphisms, $Z$ must be a reduced point. The same proof works for smoothness as the smooth locus is also open.

The action of $\mathcal{B}$ restricts to any irreducible component of the Hilbert scheme. Since the normal, Cohen-Macaulay and Gorenstein loci are all open, the proof given in the previous paragraph also proves the second statement.
\end{proof}

Since Borel-fixed ideals are fixed by the set of diagonal matrices, they must be monomial ideals. A monomial ideal $I \subseteq S$ is said to be \textit{strongly stable} if for any monomial $\boldsymbol{x}^{\boldsymbol{\alpha}} \in I$ divisible by $x_j$ we have $\boldsymbol{x}^{\boldsymbol{\alpha}} \frac{x_i}{x_j} \in I$ for all $i < j$. The relation between these two concepts is given by the following theorem.

\begin{proposition}[\textup{\cite[Proposition 2.7]{BAYER_STILLMAN} \label{exchange}}] If $\text{char}(\kk)= 0$ a monomial ideal $I \subseteq S$ is Borel-fixed if and only if $I$ is strongly stable.
\end{proposition}

\subsection{Resolutions of strongly stable ideals} \label{Eliahou-Kervaire}
The Eliahou-Kervaire resolution provides an explicit minimal free resolution of a strongly stable ideal \cite[Section 2]{EK}. We will mostly be interested in resolutions of ideals of the form $I = x_0(x_0,\dots,x_{n-1}) + x_1^q(x_1,\dots,x_{p})$ with $q\geq 1$ and $n-1\geq p \geq 0$. Note that $I$ is strongly stable in all characteristics. Following the presentation in  \cite[Section 2]{sp}, let $ 0 \to F_{n-1} \xrightarrow{\psi_{n-1}} \cdots \xrightarrow{\psi_2} F_1 \xrightarrow{\psi_1} F_0 \xrightarrow{\psi_0} I \to 0$ denote the Eliahou-Kervaire resolution of $I$ where
 \begin{equation*}
F_{0} = \left(\bigoplus_{i=0}^{n-1} S(-2)\e_{0i}^{\star}\right) \bigoplus \left(\bigoplus_{i=1}^{p} S(-q-1)\e_{1i}^{\star}\right)
\end{equation*}
and 
\begin{equation*}
F_{1} = \left( \bigoplus_{0 \leq j < i \leq n-1} S(-3)\e_{0i}^{j}  \right) \bigoplus \left( \bigoplus_{0 \leq j < i \leq p} S(-q-2)\e_{1i}^{j}  \right). 
\end{equation*}
The first two differentials are given by $\psi_0(\e_{0i}^{\star}) = x_0x_i$, $\psi_0(\e_{1i}^{\star}) = x_1^{q}x_i$ and, 
\begin{eqnarray*}
\psi_1(\e_{0i}^{j}) & = & x_j\e_{0i}^{\star} - x_i\e_{0j}^{\star}, \quad 0 \leq j < i \leq n-1 \\ 
\psi_1(\e_{1i}^{0}) & = & x_0\e_{1i}^{\star} - x_1^q\e_{0i}^{\star}, \quad 1 \leq i \leq p\\
\psi_1(\e_{1i}^{j}) & = & x_j\e_{1i}^{\star} - x_i\e_{1j}^{\star}, \quad 1 \leq j < i \leq p.
\end{eqnarray*}
This presentation also allows us to explicitly describe the first two terms of the cotangent complex \cite[Chapter 3]{deformationtheory}.
Let $R = S/I$ and let
\begin{equation*}
\text{Kos} := \psi_1^{-1}\left(\{\psi_0(\e_{l_1j_1}^{\star})\e_{l_1j_1}^{\star} - \psi_0(\e_{l_2j_2}^{\star})\e_{l_2j_2}^{\star} \} \right) \subseteq F_1,
\end{equation*}
be the pre-image of the Koszul relations in $F_0$. Let $\psi_1^{\vee}:\Hom_S(F_0,S) \to \Hom_S(F_1,S)$ denote the dual of $\psi_1$. The second cotangent cohomology, $T^2(R/\kk,R)$, is the cokernel of the following map
\begin{equation*}
\Hom_R(F_0\otimes R,R) \xrightarrow{\overline{\psi_1^{\vee}}} \Hom_R\left(F_1/(\mathrm{ker} \, \psi_1 + \text{Kos}),R\right).  \\
\end{equation*}

\subsection{Lexicographic ideals} \label{LEX}
Every Hilbert scheme has a distinguished Borel-fixed ideal called the \textit{lexicographic} ideal. We review its properties following the notation of \cite[\S 3]{SMOOTH_HILB}. A monomial ideal $L \subseteq S$ is a {lexicographic} ideal if, for all integers $j$, the homogeneous component of $I_j$ is the $\kk$-vector space spanned by the $\dim_\kk I_j$ largest monomials in lexicographic order. For an integer partition $\lambda$, there is a unique saturated lexicographic ideal, denoted by $L(\lambda)$, with Hilbert polynomial $P_{\lambda}$ \cite{MACAULAY}. It is also a smooth point in the Hilbert scheme {\cite[Theorem 1.4]{rs} and the irreducible component containing it is called the \textit{lexicographic component}.  To describe this point, let $a_j$ be the number of parts in $\lambda$ equal to $j$ for all $j \in \mathbb{N}$. If $n \geq \lambda_1$ we have
\begin{equation} \label{LEX_POINT}
L(\lambda) := (x_0^{a_n+1},x_0^{a_n}x_1^{a_{n-1}+1}, \dots,
	x_0^{a_n}x_1^{a_{n-1}}\cdots x_{n-3}^{a_3}x_{n-2}^{a_2+1},
	x_0^{a_n}x_1^{a_{n-1}} \cdots x_{n-2}^{a_2}x_{n-1}^{a_1}).
\end{equation}
If the Hilbert scheme has exactly two Borel-fixed points we will use $I(\lambda)$ to denote the non lexicographic Borel-fixed point.

\section{Classifying Hilbert polynomials} \label{classifybobo} In this section we classify Hilbert polynomials with two Borel-fixed ideals in characteristic $0$ (\autoref{poly1} and \autoref{poly2}). The first step is to reduce to studying Hilbert schemes corresponding to integer partitions $\lambda$ with $n > \lambda_1 $, equivalently Hilbert schemes parameterizing subschemes of codimension at least $2$. Using the classification of Hilbert schemes with a single Borel-fixed ideal and \autoref{moo} we obtain the desired classification.

\begin{lemma}\label{DETACH} Let $\lambda = (n^s,\lambda_{s+1},\lambda_{s},\dots,\lambda_m)$ be an integer partition with $s >0$. Then there is an isomorphism
$$
\Hilb^{P_\lambda}(\PP^n) \simeq \PP(H^0(\OO_{\PP^n}(s))) \times \Hilb^{P_{\lambda'}}(\PP^n)
$$
where $\lambda' = (\lambda_{s+1},\dots,\lambda_m)$. This isomorphism is $\GL(n+1)$-equivariant and thus induces a bijection on Borel-fixed ideals, given by $I \mapsto x_0^sI'$.
\end{lemma}
\begin{proof} By  \cite[Theorem 1.4]{FOGARTY} and  \cite[Remark 2, p. 514]{FOGARTY}  there is an isomorphism 
\begin{equation} \label{MULT}
\PP(H^0(\OO_{\PP^n}(s'))) \times \Hilb^{P'}(\PP^n) \simeq \Hilb^{P_\lambda}(\PP^n), \quad (f,[I]) \mapsto  [fI] 
\end{equation}
where $\deg P' < n-1$ and 
$$
P_\lambda(t) = \binom{t+n}{n} - \binom{t+n-s'}{n} + P'(t-s').
$$
Since the morphism \autoref{MULT} is given by multiplication of ideals, it is also $\GL(n+1)$-equivariant. Using the well-known identity on summation of binomial coefficients we obtain
$$
\sum_{i=1}^s \binom{t+n-i}{n-1} +\sum_{i=s+1}^{m} \binom{t+\lambda_i-i}{\lambda_i-1} 
	= P_\lambda(t) 
	= \sum_{i=1}^{s'} \binom{t+n-i}{n-1} + P'(t-s').
$$
Since $\deg P' < n-1$ we must have $s= s'$ and this, in turn, implies that $P' = P_{\lambda'}$. The desired bijection on Borel-fixed points follows from the $GL(n+1)$-equivariance.
\end{proof}

By \autoref{DETACH} it suffices to classify Borel-fixed ideals in Hilbert schemes corresponding to $\lambda$ with $n>\lambda_1$.

\begin{notation} For the rest of this section we will assume $\text{char}(\kk) = 0$.
\end{notation}

We begin by briefly describing a procedure that generates all the Borel-fixed ideals in characteristic $0$. Following \cite{CLMR, LELLA_12},
we fix an order on the variables so that $x_0 > x_1 > \cdots > x_n$. This induces a partial order on monomials of a fixed degree: if $
x_i > x_j$ then
$ x_i\x^{\A} > x_j \x^{\A}$.
This is called the Borel order and we denote it by $\geq_B$. 

Let $I \subseteq S$ be a stongly stable ideal with Hilbert polynomial $P(t)$ and let $\mathcal{G}(I)$ denote the set of minimal generators of $I$. 
Given an element $\x^{\A}$ of $\mathcal{G}(I)$ that is also minimal with respect to $\geq_B$ one can produce a new strongly stable ideal with Hilbert polynomial $P(t)+1$. This procedure is known as an \textit{expansion} of $I$ with respect to $\x^{\A}$, and the new strongly stable ideal is generated by $$(\mathcal{G}(I) \setminus \{\x^{\A} \}) \cup \{\x^{\A}x_r, \x^{\A}x_{r+1},\dots,\x^{\A}x_{n-1} \}$$
where $r=\max \{i:x_i | \x^{\A}\}$. For our purposes, we just need the penultimate step in the recursive algorithm.

\begin{algorithm}  \label{moo}  Every saturated strongly stable ideal of $S$ with Hilbert polynomial $P(t)$ is obtained from a strongly stable ideal of $R = \kk[x_0,\dots,x_{n-1}]$ with Hilbert polynomial $\Delta P(t) := P(t) - P(t-1)$ via a sequence of expansions. More precisely, $I$ is obtained by successively expanding $JS$ $c$ times, where $J$ is a strongly stable ideal of $R$ with Hilbert polynomial $\Delta P(t)$ and $c= P(t) - P_{S/JS}(t)$ is a constant.
\end{algorithm}

\begin{remark} An alternative algorithm to generate the strongly stable ideals is presented in \cite{moo}.
\end{remark}

Implicit in the above Algorithm is the following Lemma that will be extremely useful for us.

\begin{lemma}[\textup{\cite[Lemma 3.1, \S 4.2]{LELLA_12}}] \label{move} Let $I \subseteq S$ be a saturated strongly stable ideal. Then we can always expand $I$ at a minimal generator of degree $e$ that is minimal w.r.t. to $\geq_B$. Any such expansion is strongly stable with Hilbert polynomial $P_{S/I}(t) +1$.
\end{lemma}

\begin{remark} Integer partitions behave well with respect to the difference operator. If $\lambda = (\lambda_1,\dots,\lambda_m,1^s)$ then we have $\Delta^1 P_\lambda = P_{\lambda''}$ where $\lambda'' = (\lambda_1-1,\dots,\lambda_m-1)$. Indeed, we have
$$
\Delta^1 P_\lambda = 
	\sum_{i=1}^{m+s} \binom{t+\lambda_i-i}{\lambda_i-1} - \sum_{i=1}^{m+s} \binom{t-1+\lambda_i-i}{\lambda_i-1} =
	\sum_{i=1}^{m+s} \binom{t+(\lambda_i-1)-i}{(\lambda_i-1)-1} = P_{\lambda''}.
$$
\end{remark}

By our discussion above we see that the number of Borel-fixed points on a Hilbert scheme $\Hilb^{P_{\lambda}}(\PP^n)$ are, to some extent, determined by the number of Borel-fixed points on $\Hilb^{\Delta P_{\lambda}}(\PP^{n-1})$ and $\Hilb^{P_{\lambda}-1}(\PP^n)$. It turns out that by considering $\Hilb^{P_{\lambda}-1}(\PP^n)$, we can greatly restrict the partitions $\lambda$ that could give rise to Hilbert schemes with two Borel-fixed points.

\begin{lemma} \label{minus1} If $\Hilb^{P_\lambda}(\PP^n)$ has more than one Borel-fixed point, then $\Hilb^{P_\lambda-1}(\PP^n)$ is non-empty.
\end{lemma}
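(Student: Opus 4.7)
My plan is to show the stronger statement that $P(t) - 1$ is itself the Hilbert polynomial of a closed subscheme of $\P^n$; its lexicographic ideal then gives the desired point of $\Hilb^{P(t)-1}\P^n$.

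The main step is to extract from the hypothesis the positivity $a_0 \geq 1$ in the Macaulay expansion of $P(t)$. First I would note that if $\deg P(t) \geq n-1$ (i.e.\ codimension $\leq 1$) then $\Hilb^{P(t)}\P^n$ has a unique Borel fixed point: the whole space when $\deg P(t) = n$, and among degree-$q$ Borel fixed hypersurfaces only $V(x_0^q)$ survives the exchange property of Proposition \ref{exchange}. Consequently the hypothesis forces $\deg P(t) \leq n-2$, so Theorem \ref{staal} applies. Writing $P(t) = \sum_{i=0}^{d}\bigl(\binom{t+i}{i+1} - \binom{t+i-m_i}{i+1}\bigr)$ with $m_0 \geq m_1 \geq \cdots \geq m_d \geq 0$, the failure of Theorem \ref{staal}(ii) under our hypothesis gives $a_0 = m_0 - m_1 \neq 0$, and combined with the non-increasing property $a_0 \geq 0$ we get $a_0 \geq 1$.

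Now I would set $m_0' := m_0 - 1$ and $m_i' := m_i$ for $i \geq 1$; the inequality $a_0 \geq 1$ guarantees that $m_0' \geq m_1' \geq \cdots \geq m_d' \geq 0$ is a valid Macaulay datum. Since $\binom{t}{1} - \binom{t - m_0'}{1} = m_0 - 1$ while $\binom{t}{1} - \binom{t - m_0}{1} = m_0$ and the remaining summands are unchanged, the polynomial associated to $(m_i')_{i=0}^{d}$ is exactly $P(t) - 1$. Hence $P(t) - 1$ is a valid Hilbert polynomial, and the lex ideal built from $(m_i')$ as in Definition \ref{Lex} furnishes a point of $\Hilb^{P(t)-1}\P^n$, so this Hilbert scheme is non-empty.

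The only substantive ingredient is the Staal-theoretic deduction $a_0 \geq 1$; the rest is a mechanical manipulation of the Macaulay expansion.
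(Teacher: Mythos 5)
Your main step --- decrementing $m_0$ to $m_0-1$ and invoking Macaulay's characterization to realize $P(t)-1$ as a Hilbert polynomial, with the lexicographic ideal supplying the point --- is exactly the paper's argument, and your bookkeeping of the $i=0$ term is the correct one. The problem is your preliminary reduction to $\deg P(t)\leq n-2$. The claim that $\deg P(t)=n-1$ forces a unique Borel fixed point is false: a Hilbert polynomial of degree $n-1$ need not be that of a \emph{pure} hypersurface, and when lower-dimensional pieces are present the count of Borel fixed points is governed by the residual polynomial. Concretely, Proposition \ref{codim2reduction} and Corollary \ref{borel3} give $\Hilb^{Q(n-1,0;1,3)}\,\P^n\simeq \Hilb^{3}\,\P^n\times_k \P(H^0(\mathcal{O}_{\P^n}(1)))$, whose Borel fixed ideals are $x_0J$ with $J$ one of the two Borel fixed ideals of $\Hilb^{3}\,\P^n$ from Lemma \ref{pointborel}; this is a degree-$(n-1)$ Hilbert polynomial with two Borel fixed points. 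Your exchange-property argument only rules out extra Borel fixed hypersurfaces, not extra Borel fixed subschemes whose top-dimensional component is a hypersurface.

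The gap is repairable and does not affect the lemma in the form the paper uses it, since the surrounding section works under the standing assumption $\deg P(t)\leq n-2$ (justified there by Corollary \ref{borel3}, Proposition \ref{codim2reduction} and Example \ref{example1dim}), which is all Theorem \ref{staal} requires. If you want the unrestricted statement, replace your first paragraph as follows: the case $\deg P(t)=n$ is vacuous, and for $\deg P(t)=n-1$ Corollary \ref{borel3} identifies the Borel fixed points of $\Hilb^{P(t)}\,\P^n$ with those of the residual polynomial $Q(i_1,\dots,i_l;a_{i_1},\dots,a_{i_l})$ of degree at most $n-2$, whose coefficient $a_0$ coincides with that of $P(t)$; then apply Theorem \ref{staal} to the residual polynomial and conclude $a_0\geq 1$ as you do.
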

\begin{proof} Let $\lambda = (\lambda_1,\lambda_2,\dots,\lambda_m)$. If $\Hilb^{P_\lambda}(\PP^n)$ has more than one more Borel-fixed point then \cite[Theorem 1.1]{STAAL} implies that $\lambda_m = 1$ and $m \geq 2$. It follows that  
\begin{equation*}
P_\lambda-1 = \sum_{i=1}^{m}\binom{t+\lambda_i-i}{\lambda_i-1} - 1 
			= \sum_{i=1}^{m-1}\binom{t+\lambda_i-i}{\lambda_i-1}
			= P_{\lambda'}
\end{equation*}
with $\lambda' = (\lambda_1,\dots,\lambda_{m-1})$. Since $\lambda'$ is an integer partition with $1 \leq \lambda'_1 \leq n$, the result follows.
\end{proof}

We can now state a necessary condition for a Hilbert scheme to have two Borel-fixed points.

\begin{proposition} \label{4poly} Let $\lambda = (\lambda_1,\lambda_2,\dots,\lambda_m)$ be an integer partition with $\lambda_1 \leq n-1$. If $\Hilb^{P_\lambda}(\PP^n)$ has two Borel-fixed points then $\lambda = ((d+1)^q,1) $ or $\lambda = ((d+1)^q,r+1,1)$
\end{proposition}
\begin{proof} By \cite[Theorem 1.1]{STAAL} we may assume $\lambda_m = 1$ and $m \geq 2$. Let $\lambda' = (\lambda_1,\dots,\lambda_{m-1})$ and we have $P_\lambda = P_{\lambda'}+1$. If the lexicographic point, $L(\lambda')$, was generated in more than two degrees then \autoref{move} would imply that $\Hilb^{P_\lambda}(\PP^n)$ contains at least three Borel-fixed points; a contradiction. So we may assume that $L(\lambda')$ (Equation \autoref{LEX_POINT}) is generated in at most two degrees. Let $r$ be the smallest integer for which $a_{r+1} \ne 0$ and $d$ be the largest integer for which $a_{d+1} \ne 0$. By assumption we have $a_n = 0$. If $r = d$ we must have 
\begin{equation} \label{FIRST_LEX}
L(\lambda') = (x_0,\dots,x_{n-d-2},x_{n-d-1}^{a_{d+1}})
\end{equation}
which implies $\lambda' = ((d+1)^{a_{d+1}})$. If $d > r$ we have $a_{d+1}+1 = a_{d+1} + a_{d} + 1 = \cdots = a_{d+1} + \cdots + a_{r+2} + 1 = a_{d+1} + \cdots + a_{r+1}$. This implies $a_{r+2},\dots,a_{d} = 0$ and $a_{r+1} = 1$, and we obtain
\begin{equation} \label{SECOND_LEX}
L(\lambda') = (x_0,\dots,x_{n-d-2}) +x_{n-d-1}^{a_{d+1}}(x_{n-d-1},x_{n-d-2},\dots,x_{n-r-1})
\end{equation}
and $\lambda' = ((d+1)^{a_{d+1}},r+1)$, as required.
\end{proof}

We now turn our attention to eliminating some of the $\lambda$ that appeared in \autoref{4poly}. If the Hilbert scheme $\Hilb^{P_\lambda}(\PP^n)$ has two Borel-fixed points then they are both on the lexicographic component. Let $X_1$ and $X_2$ denote the two Borel-fixed points. By \cite[Theorem 11]{REEVES} the hyperplane sections $X_i \cap V(x_n)$ must be equal to the lexicographic point $V(L(\lambda'))$ where $\Delta P_{\lambda} = P_{\lambda'}$. Thus, if we produce a Borel-fixed point on $\Hilb^{P_\lambda}(\PP^n)$ whose hyperplane section is not $L(\lambda')$, then the corresponding Hilbert scheme cannot have two Borel-fixed points. Of course, sometimes it is simpler to directly construct three Borel-fixed ideals. We use both of these methods to obtain the following Lemma.

\begin{lemma} \label{basecase} Let $\lambda = (\lambda_1,\lambda_2,\dots,\lambda_m)$ be an integer partition with $\lambda_1 \leq n-1$. For the following partitions $\lambda$, the Hilbert scheme $\Hilb^{P_\lambda}(\PP^n)$ has at least three Borel-fixed points
\begin{enumerate}
\item[i)]  $\lambda = (1^b)$ with $b \geq 4$ and $n \geq 3$,
\item[ii)] $\lambda = (1^b)$ with $b \geq 5$  and $n = 2$,
\item[iii)] $\lambda = ((d+1)^2,2,1)$ with $d\geq 1$,
\item[iv)] $\lambda = ((d+1)^q,1^2)$ with  $d \geq 1$ and $q > 1$.
\end{enumerate}
\end{lemma}

\begin{proof} For the rest of the proof let $R = \kk[x_0,\dots,x_{n-1}]$. In case \textit{i)} we may use \autoref{exchange} to verify that the following ideals are Borel-fixed with Hilbert polynomial $P_\lambda = b$: \begin{equation*}
\begin{aligned}
&(x_0,\dots,x_{n-3},x_{n-2},x_{n-1}^b), \, (x_0,\dots,x_{n-3},x_{n-2}^2,x_{n-2}x_{n-1},x_{n-1}^{b-1}) \text{ and }\\
&(x_0,\dots,x_{n-4},x_{n-3}^2,x_{n-3}x_{n-2},x_{n-3}x_{n-1},x_{n-2}^2,x_{n-2}x_{n-1},x_{n-1}^{b-2}).
\end{aligned}
\end{equation*}

Similarly, in case \textit{ii)} we may use \autoref{exchange} to verify the following ideals are Borel-fixed with Hilbert polynomial $P_\lambda = b$:
$$
(x_0,x_1^b), \, (x_0^2,x_0x_1,x_1^{b-1})  \text{ and }  (x_0^2,x_0x_1^2,x_1^{b-2}).
$$

If we are in case \textit{iii)} then consider the following Borel-fixed ideal
$$
J = (x_0,\dots,x_{n-d-3})+x_{n-d-2}(x_{n-d-2},\dots, x_{n-2}) +(x_{n-d-1}^2).
$$
To see that $J$ has Hilbert polynomial $P_\lambda$, it suffices to compare it to 
$$
L(\lambda) = (x_0,\dots,x_{n-d-2}) +x_{n-d-1}^{2}(x_{n-d-1},x_{n-d-2},\dots,x_{n-3}) + x_{n-d-1}^{2}x_{n-2}(x_{n-2},x_{n-1}).
$$
Indeed, for $j \gg 0$ we have 
$$
J_j \setminus L(\lambda)_j = \{x_{n-d-1}^2x_{n-d-2}x_n^{j-3}\} \cup \{x_{n-d-1}^2x_{n-1}^ex_n^{j-2-e}\}_{0\leq e \leq j-2}
$$ 
and 
$$
L(\lambda)_j \setminus J_j = \{x_{n-d-2}x_{n-1}^ex_{n}^{j-1-e}\}_{0\leq e \leq j-1}.
$$
Since these two sets have the same cardinality $j$, it follows that $P_{S/L(\lambda)}(t) = P_{S/J}(t)$. The hyperplane section $V(x_n) \cap V(J)$ is defined by the saturated ideal
$$ (x_0,\dots,x_{n-d-3})+x_{n-d-2}(x_{n-d-2},\dots, x_{n-2}) +(x_{n-d-1}^2).
$$
Since this is different from $L(d^3) = (x_0,\dots,x_{n-d-2},x_{n-d-1}^3)$, the Hilbert scheme cannot have two Borel-fixed points.

Finally, if we are in case \textit{iv)} we have the following Borel-fixed ideals
\begin{align*}
L(\lambda)  &= (x_0,\dots,x_{n-d-2}) +x_{n-d-1}^{q}(x_{n-d-1},x_{n-d-2},\dots,x_{n-2}) + (x_{n-d-1}^{q}x_{n-1}^2), \\
I & = (x_0,\dots,x_{n-d-3}) + x_{n-d-2}(x_{n-d-2},\dots,x_{n-1}) +x_{n-d-1}^{q}(x_{n-d-1},x_{n-d-2},\dots,x_{n-1}), \\
J & = (x_0,\dots,x_{n-d-3}) + x_{n-d-2}(x_{n-d-2},\dots,x_{n-2},x_{n-1}^2) + (x_{n-d-1}^q).
\end{align*}
Just as we did in case \textit{iii)}, it is straightforward to see that the three ideals have Hilbert polynomial $P_\lambda$. For instance, consider $J$ and note that for $j \gg 0$ we have
$$
J_j \setminus L(\lambda)_j = \{x_{n-d-1}^qx_{n-1}x_n^{j-q-1}\} \cup \{x_{n-d-1}x_n^{j-q}\}
$$ 
and 
$$
L(\lambda)_j \setminus J_j = \{x_{n-d-2}x_{n-1}x_n^{j-2}, x_{n-d-2}x_n^{j-1}\}. 
$$
\end{proof}

We are now ready to prove the main result of this section. It will turn out that the constraints we have found on $\lambda$ up until this point are sufficient. We establish this by studying the expansions of Borel-fixed ideals with Hilbert polynomial $\Delta P(t)$ (\autoref{moo}). Since the Borel-fixed ideals naturally fit into two distinct families, we split the result into two Propositions.

\begin{proposition} \label{poly1} Let $\lambda = ((d+1)^q,1)$ with $n -2 \geq d$. The Hilbert scheme $\Hilb^{P_\lambda}(\PP^n)$ has two Borel-fixed points if and only if $n \geq 2$ and 
\begin{enumerate}
\item[i)] $d=0$ and $q=2$, or
\item[ii)] $d = 0$, $q=3$ and $n=2$, or
\item[iii)] $d=1$ and $q \ne 1,3 $, or
\item[iv)] $d \geq 2$ and $q \geq 2$.
\end{enumerate}
The two Borel-fixed ideals are
\begin{align*}
I(\lambda) & = (x_0,\dots,x_{n-d-3}) + x_{n-d-2}(x_{n-d-2},\dots,x_{n-1}) +(x_{n-d-1}^{q}), \\
L(\lambda) & =  (x_0,\dots,x_{n-d-2}) +x_{n-d-1}^{q}(x_{n-d-1},x_{n-d-2},\dots,x_{n-1}).
\end{align*}
\end{proposition}

\begin{proof} The ideals $I(\lambda)$ and $L(\lambda)$ are expansions of a lexicographic ideal $(x_0,\dots,x_{n-d-2},x_{n-d-1}^q)$. Since the latter ideal has Hilbert polynomial $P_{((d+1)^q)}$, it follows from \autoref{move} that the Hilbert polynomial of $I(\lambda)$ and $L(\lambda)$ is $P_\lambda$. We first show that the cases are necessary. By \cite[Theorem 1.1 (ii)]{STAAL} if $n=1$ or $q = 1$ the Hilbert scheme has a single Borel-fixed point. The remaining conditions on $\lambda$ follow from \autoref{basecase}.

If we are in case \textit{i)} then the Hilbert scheme parameterizes subschemes of length three. Any such subscheme can be realized as $\lim_{t \to 0} Z_t = Z$ where $Z_t$ a reduced union of three points for $t \in \mathbf{A}^1 - 0$ \cite{cn}. By upper-semicontinuity, since the union of three reduced points is contained in a $\PP^2$, the subscheme $Z$ is also contained in a $\PP^2$. If $Z$ was Borel-fixed this implies $I_Z = (x_0,\dots,x_{n-3}) + JS$ with $J \subseteq S':=\kk[x_{n-2},x_{n-1},x_{n}]$ and $P_{S'/J}(t)=3$.
Using \autoref{exchange} we see that only choices are $(x_0,\dots,x_{n-3},x_{n-2}^2,x_{n-2}x_{n-1},x_{n-1}^2)$ and $(x_{0},\dots,x_{n-3},x_{n-2},x_{n-1}^3)$.

If we are in case \textit{ii)} then \autoref{exchange} shows that $(x_0,x_1^4)$ and $(x_0^2,x_0x_1,x_1^3)$ are the only two Borel-fixed ideals. 

So we may assume that we are in case \textit{iii)} or case \textit{iv)} of the theorem. Let $\lambda' = ((d+1)^q)$ and $\lambda'' = (d^q)$. By \autoref{moo} we begin by computing all the Borel-fixed ideals in $R:= \kk[x_0,\dots,x_{n-1}]$ with Hilbert polynomial, $\Delta^1 P_\lambda = P_{\lambda''}$.

For $d \geq 2$ the Hilbert scheme $\Hilb^{P_{\lambda''}}(\Proj(R))$ has a unique Borel-fixed point \cite[Theorem 1.1]{STAAL} and it is given by 
$
L(\lambda'') = (x_0,\dots,x_{n-d-2},x_{n-d-1}^q).
$
The lift of $L(\lambda'')$ to $S$ is just the lexicographic ideal, $L(\lambda')$, with Hilbert polynomial $P_{\lambda'} = P_\lambda - 1$. Thus, in the last step of the algorithm, we only need to perform \textit{one} successive expansion. Once with the monomial $x_{n-d-2}$ and once with the monomial $x_{n-d-1}^{q}$, giving us the two desired Borel-fixed ideals.

The last case is if $d=1$ and $q \ne 1,3$. In this case we have 
\begin{equation*}
P_\lambda(t) = 
\sum_{i=1}^{q} \binom{t+2-i}{2-1} + 1 
= qt + 2 - \binom{q-1}{2}.
\end{equation*}
Since $\Delta^1(P_\lambda) = q$ we compute all the Borel-fixed ideals in $R$ with Hilbert polynomial $q$. One such ideal is $I= (x_0,\dots,x_{n-3},x_{n-2}^{q})$ whose lift, $IS$, is the ideal of a plane curve of degree $q$. Thus, the Hilbert polynomial of $IS$ is $P_{\lambda'}$ and we may expand $IS$ at $x_{n-3}$ and $x_{n-2}^q$ to obtain the two Borel-fixed ideals. To finish, it suffices to show that if $J$ is a Borel-fixed ideal in $R$ different from $I$ then the Hilbert polynomial of the lift, $JS$, is bigger than $P_{\lambda}$. For such a $J$ to exist we must have $q \geq 4$. 
In particular, we will prove that  $P_{S/JS}(t) \geq P_\lambda(t) +1 = P_{\lambda'}(t) + 2 $ for all $t \gg 0$. Since $J \ne I$, we may assume that $x_{n-2}^\ell \in J$ and $x_{n-2}^{\ell-1} \notin J$ for some  $1<\ell < q$. 
This implies that for $j \gg 0$, $(R/J)_j$ is spanned by 
\begin{equation*}
\left\{ m_1x_{n-1}^{j-\deg m_1},\dots,m_{q-\ell}x_{n-1}^{j-\deg m_{q-\ell}},x_{n-1}^j,x_{n-2}x_{n-1}^{j-1},\dots,x_{n-2}^{\ell-1}x_{n-1}^{j-\ell +1} \right\}.
\end{equation*}
We may assume that the $m_i$ are monomials of degree strictly less than $\ell$ and not divisible by $x_{n-1}$ (applying the exchange property to $x_{n-2}^{\ell}$, we see that $J$ contains all monomials of degree at least $\ell$ supported on $x_0,\dots,x_{n-2}$). 
Thus, for $j \gg 0$ the graded piece $(S/JS)_j$ contains the monomials in $x_{n-2}^p(x_{n-1},x_n)^{j-p}$ for $0\leq p \leq \ell -1$ and the monomials in $m_v(x_{n-1},x_{n})^{j-\deg m_v}$ for $1 \leq v \leq q -\ell$. This implies 
\begin{equation*}
\dim_\kk (S/J)_j  \geq \sum_{p=0}^{\ell-1} (j-p+1) + \sum_{v = 1}^{q-\ell} (j-\deg m_v+1) 
 \geq  \sum_{p=0}^{\ell-1} (j-p+1) + \sum_{v = 1}^{q-\ell} (j-\ell+1+1). 
\end{equation*}
If we further assume $\ell < q -1$, we may rewrite the sum and obtain
\begin{eqnarray*}
\dim_\kk (S/J)_j & \geq &  \sum_{p=0}^{\ell-1} (j-p+1) + \sum_{v = 1}^{q-\ell} (j-\ell+1) + (q -\ell)  \\
 &\geq &   \sum_{p=0}^{\ell-1} (j-p+1) + \sum_{v = \ell}^{q-1} (j-v+1) + (q -\ell) \\
	& = & \sum_{p=0}^{q-1}(j-p+1) + (q - \ell) \\
	&= & qj + 1 - \binom{q-1}{2} + (q-\ell) \\
	& \geq &  \dim_\kk (S/IS)_j + 2 = P_{\lambda'}(j) + 2
\end{eqnarray*}
as required. Finally, if $\ell = q -1$, the exchange property forces
\begin{equation*}
J= (x_0,\dots,x_{n-4},x_{n-3}^2,x_{n-3}x_{n-2},x_{n-2}^{q-1}).
\end{equation*}
Since $q \geq 4$, one can observe that $P_{S/JS}(t) = P_{S/IS}(t)+2$, completing the proof.
\end{proof}

\begin{proposition} \label{poly2} Let $\lambda = ((d+1)^q,r+1,1)$ with $d > r$. The Hilbert scheme $\Hilb^{P_\lambda}(\PP^n)$ has two Borel-fixed points if and only if $n \geq 2$ and
\begin{enumerate}
\item[i)] $r = 0, q =1$, or
\item[ii)] $r = 1, q \ne 2$, or
\item[iii)] $r \geq 2$ .
\end{enumerate}
The two Borel-fixed ideals are
\begin{align*}
I(\lambda) & = (x_0,\dots,x_{n-d-3}) + x_{n-d-2}(x_{n-d-2},\dots,x_{n-1}) +x_{n-d-1}^{q}(x_{n-d-1},x_{n-d-2},\dots,x_{n-r-1}), \\
L(\lambda) & = (x_0,\dots,x_{n-d-2}) +x_{n-d-1}^{q}(x_{n-d-1},x_{n-d-2},\dots,x_{n-r-2}) + x_{n-d-1}^{q}x_{n-r-1}(x_{n-r-1},\dots,x_{n-1}).
\end{align*}
\end{proposition}
\begin{proof} Since $I(\lambda)$ and $L(\lambda)$ are expansions of the lexicographic ideal (\ref{SECOND_LEX}) it follows from \autoref{move} that their Hilbert polynomial is $P_\lambda$. By \autoref{basecase} these conditions are also necessary; if $n=1$ the Hilbert scheme has a single Borel-fixed point.

Now assume that we are in case \textit{i)}, \textit{ii)} or \textit{iii)}. Let $\lambda' = ((d+1)^q,r+1)$ and $\lambda'' = (d^q,r)$. We begin by  computing all the Borel-fixed ideals in $R:=\kk[x_0,\dots,x_{n-1}]$ with Hilbert polynomial $\Delta^1 P_\lambda = P_{\lambda''}$. 

If $r \geq 2$ or $(r,q) = (1,1)$ the Hilbert scheme $\Hilb^{P_{\lambda''}}(\Proj(R))$ has a unique Borel-fixed point \cite[Theorem 1.1]{STAAL} and it is given by 
$
L(\lambda'') = (x_0,\dots,x_{n-d-2}) +x_{n-d-1}^{q}(x_{n-d-1},x_{n-d-2},\dots,x_{n-r-1}).
$
The lift of $L(\lambda'')$ to $S$ is just the lexicographic ideal, $L(\lambda')$, with Hilbert polynomial $P_{\lambda'} = P_\lambda - 1$. Thus, to obtain all the Borel-fixed ideals we only need to perform a single expansion. Once with the monomial $x_{n-d-2}$ and once with the monomial $x_{n-d-1}^{q}x_{n-r-1}$, giving us the two Borel-fixed ideals.

Similarly, if $(r,q)=(0,1)$ the Hilbert scheme $\Hilb^{P_{\lambda''}}(\Proj(R))$ has a unique Borel-fixed point \cite[Theorem 1.1]{STAAL} and it is given by $(x_0,\dots,x_{n-d-1})$. The lift to $S$  has Hilbert polynomial $\binom{t+d}{d} = P_{\lambda} -2$. Thus, we begin by performing an expansion with $x_{n-d-1}$ to obtain $(x_0,\dots,x_{n-d-2}) + x_{n-d-1}(x_{n-d-1}, \break \dots,x_{n-1})$. This is the lexicographic ideal $L(\lambda')$ and we conclude as in the previous paragraph.

Assume  $r=1$ and $q \geq 3$. Then  \autoref{poly1} \textit{iii)} implies that the Hilbert scheme $\Hilb^{P_{\lambda''}}(\Proj(R))$ has two Borel-fixed ideals, $I'' := (x_0,\dots,x_{n-d-3}) + x_{n-d-2}(x_{n-d-2},\dots,x_{n-2}) +(x_{n-d-1}^{q})$ and $L(\lambda'')$.  We first show that the Hilbert polynomial of $I''S$ is larger than $P_\lambda$. We can do this by comparing the number of generators of $(I''S)_j$ to those of $I(\lambda)_j$ for $j \gg 0$. Let $\mathfrak{C}_j$ denote the intersection of the monomials of $(I''S)_j$ with the monomials of $I(\lambda)_j$. Then it is evident that 
$I(\lambda)_j$
is generated by 
$\mathfrak{C}_j \cup \{x_{n-d-2}x_{n-1}x_{n-1}^ax_{n}^b\}_{a+b = j-2}$
while
$(I''S)_j$
is generated by
$\mathfrak{C}_j \cup \{x_{n-d-1}^qx_{n-1}^ax_n^b\}_{a+b=j-q}$
for all $j \gg 0$. This implies $P_{S/I(\lambda)}(t) + j-1 =  P_{S/I''S}(t)+ j-q+1$. It follows that $P_{S/I''S}(t) = P_{S/I(\lambda)}(t) + (q-2) = P_\lambda(t) + (q-2) > P_\lambda(t)$, as required.
Thus, we only need to perform one successive expansion of the lexicographic ideal, $L(\lambda'')S = L(\lambda')$. This will give us the two desired Borel-fixed ideals.
\end{proof}

Note that \autoref{poly1} corresponds to cases \textit{(i)} - \textit{(v)} in \autoref{THEOREM_A} while \autoref{poly2} corresponds to the other cases. 

\begin{remark} Computing Hilbert polynomials of strongly stable ideals by comparing it to the lexicographic ideal is a very useful technique. See, for instance, \cite{MALL, LELLA_12, KAMBE_LELLA}.
\end{remark}



\section{Deformation Theory} \label{DeformationSection} 
In this section we compute the tangent space to the non lexicographic Borel-fixed ideal, $[I(\lambda)]$, and provide a partial basis for the second cotangent cohomology group of $S/I(\lambda)$. These are essential for the computation of the universal deformation space of $I(\lambda)$, which we carry out in \autoref{PROOF}. The general procedure to compute the universal deformation space can be found in \cite[\S 3]{js} and \cite[\S 5]{ps}. We begin with a useful result that relates the universal deformation space of an ideal $I$ to an analytic neighbourhood of $[I]$ in its Hilbert scheme.

\begin{theorem}[Comparison Theorem \cite{ps}] \label{compare} Let $X\subseteq \PP^{n}$ be a subscheme with ideal $I_X=(f_{1},\dots,f_{s})$ where $\deg f_{i}=d_{i}$ satisfying, $(\kk[x_{0},\dots,x_{n}]/I_X)_{e}\simeq H^{0}(\mathcal{O}_{X}(e))$ for $e=d_{1},\dots,d_{s}$. Then there is an isomorphism between the universal deformation space of $I_X$ and that of $X$; the latter is an analytic neighbourhood of $\Hilb(\PP^n)$ around $[X]$. In particular, 
$$T_{[I_X]}\, \Hilb(\PP^n) = H^0(\PP^n,\mathcal{N}_{X/\PP^n}) = \Hom(I_X,S/I_X)_0.$$
\end{theorem}

From \autoref{poly1} and  \autoref{poly2} we see that $I(\lambda)$ lies inside a unique $\mathbf{P}^{d+2}$. As a consequence, any embedded deformation of the $I(\lambda)$ in $\PP^n$ can be realized as a deformation of the $I(\lambda)$ \textit{in} $\mathbf{P}^{d+2}$ along with a deformation of $\mathbf{P}^{d+2}$ in $\mathbf{P}^n$. In other words, \'etale locally around $[I(\lambda)]$ we have an isomorphism 
\begin{equation} \label{SPLIT}
\Hilb^{P_\lambda}(\PP^n) \simeq  \Hilb^{P_\lambda}(\PP^{d+2}) \times \mathbf{A}^{(d+3)(n-d-2)}.
\end{equation}
As a consequence, it suffices to consider the case $n=d-2$. 

\begin{notation}  For the rest of this section we assume $n=d-2$. We also assume  $\lambda$ is of the form $((d+1)^q,1)$ satisfying the conditions of \autoref{poly1}, or of the form $((d+1)^q,r+1,1)$ satisfying the conditions of \autoref{poly2}. In the first case the corresponding non lexicographic ideal is
$$
I(\lambda) = x_0(x_0,\dots,x_{n-1}) + (x_1^{q})
$$
and in the second case it is
$$
I(\lambda) = x_0(x_0,\dots,x_{n-1}) + x_1^q(x_1,\dots,x_{n-r-1}).
$$
\end{notation}

We start by verifying that the comparison theorem holds in all cases of interest. 

\begin{lemma} \label{comparison} If $\lambda \ne (1^4)$ then $(S/I(\lambda))_{e}\simeq H^{0}(\PP^n,\mathcal{O}_{\Proj(S/I(\lambda))}(e))$ for all $e \geq 1$. 
\end{lemma}
\begin{proof} For the purpose of this proof it will be convenient to unify notation and express 
$$I(\lambda) = x_0(x_0,\dots,x_{n-1}) + x_1^q(x_1,\dots,x_{p})$$
with $0 \leq p \leq n-1$.  Let  $X = \Proj(S/I(\lambda))$ and assume $p \ne n-1$.  Let $J = (x_0) + x_1^q(x_1,\dots,x_{p})$ and consider the exact sequence $0 \longrightarrow J/I(\lambda) \longrightarrow S/I(\lambda) \longrightarrow S/J \longrightarrow 0$. The associated long exact sequence in local cohomology of graded $S$-modules is
\begin{equation*}
0 \longrightarrow 
	H_{\mm}^0(J/I(\lambda)) \longrightarrow 
	H_{\mm}^0(S/I(\lambda)) \longrightarrow  
	H_{\mm}^0(S/J) \longrightarrow 
	H_{\mm}^{1}(J/I(\lambda)) \longrightarrow  
	H_{\mm}^{1}(S/I(\lambda)) \longrightarrow  
	H_{\mm}^{1}(S/J).
\end{equation*}
Since $x_{n-1}$ and $ x_n$ are nonzero divisors on $S/J$ we have  $\mathrm{depth}_{\mm}(S/J) \geq 2$. This implies that the local cohomology groups $H^0_{\mm}(S/J)$ and $H^1_{\mm}(S/J)$ are zero. As graded $S$-modules, we have $J/I(\lambda) \simeq (S/(x_0,\dots,x_{n-1}))(-1):= \bar{S}(-1)$. The associated sheaf on $\PP^n$ is just the structure sheaf of a point. Consider the following exact sequence
\begin{equation*}
0 \longrightarrow H_{\mm}^0(\bar{S}(-1)) \longrightarrow \bar{S}(-1) \longrightarrow H^0_{\star}(\mathcal{O}_{\mathrm{pt}}(-1)) \longrightarrow H^1_{\mm}(\bar{S}(-1)) \longrightarrow 0.
\end{equation*}
For all $e \geq 1$ we have $H^0_{\star}(\mathcal{O}_{\mathrm{pt}}(-1))_e = H^0(\mathcal{O}_{\mathrm{pt}}(e-1)) = H^0(\mathcal{O}_{\mathrm{pt}}) = \kk \simeq \bar{S}(-1)_{e}$. Thus, we have $H^0_{\mm}(\bar{S}(-1))_e = H^1_{\mm}(\bar{S}(-1))_e = 0$ for all $e \geq 1$.

Combining this with the first long exact sequence we obtain $H^0_{\mm}(S/I(\lambda))_e = H^1_{\mm}(S/I(\lambda))_e = 0$ for all $e \geq 1$. The desired result now follows from using the exact sequence
\begin{equation*}
0 \longrightarrow H_{\mm}^0(S/I(\lambda)) \longrightarrow S/I(\lambda) \longrightarrow H^0_{\star}(\PP^n,\mathcal{O}_{X}) \longrightarrow H^1_{\mm}(S/I(\lambda)) \longrightarrow 0.
\end{equation*}

The remaining case is when $p=n-1$ and $q=1$ (we excluded the case of $n=2, q =2$). In this case the regularity of $I(\lambda)$ is $2$ \cite[Corollary 3.1]{sp}. Thus Corollary 4.8 and Proposition 4.16 in \cite{syzygies} establish that $\dim_{\kk} (S/I(\lambda))_e = P_{S/I(\lambda)}(e) = P_{X}(e) = h^0(\PP^n,\mathcal{O}_X(e))$ for all $e\geq 1$.
\end{proof}

The next four propositions provide a basis for the tangent space to each $[I(\lambda)]$. Since their proofs are very similar we will only provide all the details for the first one.

\begin{definition} For $S = \kk[x_0,\dots,x_n]$ and for $q \geq 1$ define the following subsets
\begin{enumerate}
\item $\mathcal{T}_1 = \{x_{i_1}\cdots x_{i_{q}}: 1 \leq i_1 \leq i_2 \leq \cdots \leq i_{q} \leq n  \} \, \setminus \, \{x_1^q,x_1^{q-1}x_2,\dots,x_1^{q-1}x_n\}.$
\item $\mathcal{T}_2 = \{x_1^{q-1}x_2,\dots,x_1^{q-1}x_n\}$.
\end{enumerate}
\end{definition}

\begin{proposition} \label{tangent1} Let $\lambda = ((n-1)^q,r+1,1)$ be an integer partition. Assume $n \geq 4$ and either $r \geq 2$ and $q\geq 1$, or $r=1$ and $q \geq 3$. Then
\begin{equation*}
\dim_\kk T_{[I(\lambda)]} \, \Hilb^{P_\lambda}(\PP^n) =  3n-1 + (n-r-2)(r+1) + \binom{n+q-1}{n-1}.
\end{equation*}
A general $\varphi \in \Hom(I(\lambda),S/I(\lambda))_{0}$ can be written as
\begin{eqnarray*}
\varphi(x_0^2) & = & a_0x_0x_n \\
\varphi (x_0x_i) & = & a_{i}x_0x_n  + c_1x_1x_i + c_2x_2x_i + \cdots + c_nx_nx_{i}, \quad 1 \leq i \leq n-1\\
\varphi(x_1^{q+1}) & = & b_{1}x_0x_{n}^q + \sum_{\omega \in \mathcal{T}_1}c_{\omega}x_1\omega +  \ell^{1}_{n-r}x_1^qx_{n-r} + \cdots \ell^{1}_{n}x_1^qx_{n} , \quad \, 1 \leq i \leq n-r-1 \\
\varphi(x_1^qx_i) & = & b_{i}x_0x_{n}^q + \sum_{\omega \in \mathcal{T}_1 \cup \mathcal{T}_2}c_{\omega}x_i\omega +  \ell^{i}_{n-r}x_1^qx_{n-r} + \cdots \ell^{i}_{n}x_1^qx_{n} , \quad \, 2 \leq i \leq n-r-1
\end{eqnarray*}
where $a_0,\dots,a_{n-1}, b_1,\dots,b_{n-r-1},c_1,\dots,c_n$, $\{c_{\omega}\}_{\omega \in \mathcal{T}_1 \cup \mathcal{T}_2}$, and $\{\ell_{j}^{i}\}_{n-r \leq j \leq n}^{1\leq i \leq n-r-1}$ are independent parameters.
\end{proposition}
\begin{proof} By \autoref{compare} and  \autoref{comparison}, $\dim_\kk T_{[I(\lambda)]} \, \Hilb^{P_\lambda}(\PP^n) = \dim_\kk \Hom(I(\lambda),S/I(\lambda))_0$. 
Let $F_1 \overset{\psi_1}{\longrightarrow} F_0 \overset{\psi_0}{\longrightarrow} I(\lambda) \longrightarrow 0$ be the beginning of the Eliahou-Kervaire resolution from  \autoref{Eliahou-Kervaire}.  We have the following exact sequence
\begin{equation*}
0 \longrightarrow \Hom(I(\lambda),S/I(\lambda))_0 \longrightarrow \Hom(F_0,S/I(\lambda))_0 \overset{\psi_1^{\vee}}{\longrightarrow} \Hom(F_1,S/I(\lambda))_0.
\end{equation*}
Dualizing $\psi_1$ we see that $\phi \in \Hom(I(\lambda),S/I(\lambda))_0$ if and only if the following relations hold in $S/I(\lambda)$
\begin{eqnarray*}
	\phi(x_{0}x_{i})x_{j} & =& \phi(x_{0}x_{j})x_{i},\quad 0 \leq i,j \leq n-1 \\
	\phi(x_{0}x_j)x_{1}^{q} & = & \phi(x_{1}^{q}x_{j})x_{0},\quad 1 \leq j \leq n-r-1 \\
	\phi(x_{1}^qx_{i})x_{j} & = & \phi(x_{1}^qx_{j})x_{i},\quad 1 \leq i,j \leq n-r-1.
\end{eqnarray*}

It is straightforward to check that the family described in the statement satisfies these relations. 

Conversely, given $\phi \in \Hom(I(\lambda),S/I(\lambda))_0$ we need to show that $\phi$ lies in our family. For any $i \ne n-1$, the relation $\phi(x_{0}x_{i})x_{n-1}=\phi(x_{0}x_{n-1})x_{i}$ implies that $x_i$ divides all the monomials in the support of $\phi(x_0x_i)$ that are not annihilated by $x_{n-1}$. But the only quadratic monomial that is non-zero in $S/I$ and annihilated by $x_{n-1}$ is $x_0x_{n}$. Thus, for $ i \neq n-1$ the image $\phi(x_{0}x_{i})$ is supported on $\{x_1x_i,x_{2}x_{i},\dots,x_{n}x_{i},x_{0}x_{n}\}$. Since $r \geq 2$ or $q \geq 3$, the only quadratic monomial (non-zero in $S/I(\lambda)$) annihilated by $x_{n-2}$ is $x_{0}x_n$. Thus the relation $\phi(x_0x_{n-2})x_{n-1} = \phi(x_0x_{n-1})x_{n-2}$ implies  $\phi(x_{0}x_{n-1})$ is also supported on $\{x_1x_{n-1},x_{2}x_{n-1},\dots,x_{n}x_{n-1},x_{0}x_{n}\}$. Analogously, we may use the relation $\phi(x_{1}^qx_{i})x_{j} = \phi(x_{1}^qx_{j})x_{i}$ to deduce that $\phi(x_{1}^qx_{i})$ is supported on $\{x_1^qx_{n-r}\dots,x_1^qx_{n}\,x_{0}x_{n}^q\} \cup x_i\mathcal{T}_1 \cup x_i\mathcal{T}_2$.

Let $\phi(x_{0}x_{n-1})=a_{n-1}x_{0}x_{n}+c_{2}x_{n-1}x_{2}+\cdots c_{n}x_{n-1}x_{n}$ for some constants $c_i$. Then for $j \ne n-1$, the relation $x_{j}\phi(x_{0}x_{n-1})=x_{n-1}\phi(x_{0}x_{j})$ implies $\phi(x_{0}x_{j})=a_{j}x_{0}x_{n}+c_{2}x_{j}x_{2}+\cdots +c_{n}x_{j}x_{n}$ for some constant $a_j$. Now assume 
\begin{equation*}
\phi(x_{1}^{q}x_2) = b_2x_0x_n^q+ \sum_{\omega \in \mathcal{T}_1 \cup \mathcal{T}_2}c_{\omega}x_2\omega + \ell^{2}_{n-r}x_1^qx_{n-r} + \cdots + \ell^{2}_{n}x_1^qx_n.
\end{equation*}
with $c_{\omega}, \ell^{2}_i,b_2$ some constants. For $j  \geq 3$ the relation $\phi(x_{1}^{q}x_2)x_{j} = \phi(x_{1}^qx_{j})x_{2}$ implies
\begin{equation*}
\phi(x_{1}^{q}x_j) = b_jx_0x_n^q+ \sum_{\omega \in \mathcal{T}_1 \cup \mathcal{T}_2}c_{\omega}x_j\omega + \ell^{j}_{n-r}x_1^qx_{n-r} + \cdots + \ell^{j}_{n}x_1^qx_n.
\end{equation*}
where $l^{j}_{i},b_j$ are constants. Note that if $j=1$ then the non-zero elements of $x_j\mathcal{T}_2$ are  $\{x_1^qx_{n-r},\dots,x_1^qx_{n}\}$. Thus, $\phi(x_1^{q+1})$ is also of the desired form and this completes the proof.
\end{proof}

\begin{proposition} \label{tangent2} Let $\lambda = (n-1,2,1)$ be an integer partition with $n 
\geq 4$. Then 
\begin{equation*}
\dim_\kk T_{[I(\lambda)]} \, \Hilb^{P_\lambda}(\PP^n) = 6n-6.
\end{equation*}
A general $\varphi \in \Hom(I(\lambda),S/I(\lambda))_{0}$ can be written as
\begin{eqnarray*}
	\varphi(x_{0}^{2}) & = & a_{0}x_{0}x_{n}\\
	\varphi(x_{0}x_{i}) & = & a_{i}x_{0}x_{n}+c_{2}x_{2}x_{i}+c_{3}x_{3}x_{i}+\cdots+c_{n}x_{n}x_{i}, \quad 1 \leq i \leq n-2 \\
	\varphi(x_{0}x_{n-1}) & = & a_{n-1}x_{0}x_{n} + c_{1}x_{1}x_{n-1}+c_{2}x_{2}x_{n-1}+\cdots+c_{n}x_{n}x_{n-1} + \boxed{\alpha x_1x_n} \\
	\varphi(x_{1}^{2}) & = & b_{1}x_{0}x_{n}+ \ell_{n-1}^{1}x_{1}x_{n-1}+\ell_{n}^{1}x_{1}x_{n}\\
	\varphi(x_{1}x_{i}) & = & b_{i}x_{0}x_{n}+d_{2}x_{2}x_{i}+\cdots+d_{n}x_{n}x_{i}+\ell_{n-1}^{i}x_{1}x_{n-1} + \ell_{n}^{i}x_{1}x_{n}, \quad 2 \leq i \leq n-r-1.
\end{eqnarray*}
where $\alpha,a_0,\dots,a_{n-1}, b_1,\dots,b_{n-2},c_1,\dots,c_n, 
d_2,\dots,d_n$ and $\{\ell_{n-1}^{i},\ell_n^i\}_{1\leq i \leq n-2}$ are independent parameters.
\end{proposition}

\begin{proposition} \label{tangent3} Let $\lambda = (n-1,1,1)$ be an integer partition with $n \geq 3$. Then
\begin{equation*}
\dim_\kk T_{[I(\lambda)]} \, \Hilb^{P_\lambda}(\PP^n) = 6n-4.
\end{equation*}
A general $\varphi \in \Hom(I(\lambda),S/I(\lambda))_{0}$ can be written as
\begin{eqnarray*}
	\varphi(x_{0}^{2}) & = &  a^0_{0}x_{0}x_{n} + a^1_{0}x_1x_n\\
	\varphi(x_{0}x_{1}) & = & a^0_{1}x_{0}x_{n}+a^1_{1}x_1x_n\\
	\varphi(x_{0}x_{i}) & = & a^0_{i}x_{0}x_{n}+a^1_{i}x_1x_n+c_{2}x_{2}x_{i}+c_{3}x_{3}x_{i}+\cdots+c_{n}x_{n}x_{i}, \quad 2 \leq i \leq n-1 \\
	\varphi(x_{1}^{2}) & = & b^0_{1}x_{0}x_{n}+b^1_{1}x_1x_n\\
	\varphi(x_{1}x_{i}) & = & b^0_{i}x_{0}x_{n}+b^1_{i}x_1x_n+d_{2}x_{2}x_{i}+\cdots+d_{n}x_{n}x_{i}, \quad 2 \leq i \leq n-1.
\end{eqnarray*}
where $c_2,\dots,c_n,d_2,\dots,d_n,\{a^0_i,a^1_i\}_{0 \leq i \leq n-1}, \{b^0_i,b^1_i\}_{1 \leq i \leq n-1}$ are independent parameters.
\end{proposition}

\begin{proposition} \label{tangent4} 
Let $\lambda = ((n-1)^q,1)$ be an integer partition where either $n=3$ and $q \geq 4$, or $n \geq 4$ and $q \geq2$. Then
\begin{equation*}
\dim_\kk T_{[I(\lambda)]} \, \Hilb^{P_\lambda}(\PP^n) = 2n-1 + \binom{n+q-1}{n-1}.
\end{equation*}
A general $\varphi \in \Hom(I(\lambda),S/I(\lambda))_{0}$ can be written as
\begin{eqnarray*}
\varphi(x_0^2) & = & a_0x_0x_n \\
\varphi (x_0x_i) & = & a_ix_0x_n  + c_1x_1x_i + \cdots + c_nx_nx_i \\
\varphi(x_1^{q}) & = & b_{1}x_0x_{n}^{q-1} + \sum_{\omega \in \mathcal{T}_1 \cup \mathcal{T}_2 \setminus x_{n}^q}c_{i,\omega}\omega,
\end{eqnarray*}
where $a_0,\dots,a_{n-1}, b_1,c_1,\dots,c_n,c_{i,\omega}$ are independent parameters.
\end{proposition}

As we will see in \autoref{PROOF}, for $\lambda = ((n-1)^q,1)$ the ideal $I(\lambda)$ corresponds to a smooth point on its Hilbert scheme. To understand the geometry in a neighborhood of the other $[I(\lambda)]$, we will need to compute its deformation space. To do this, we may exclude the trivial deformations, those induced by coordinate changes, as they are unobstructed. More precisely, we want to compute $T^1(R/\kk,R)_0$ where $R =S/I(\lambda)$ \cite[\S 3, p. 24]{js}. A straightforward computation of the partial derivatives gives the following bases for $T^1$ . 

\begin{corollary} \label{nontrivial1} Let $\lambda = ((n-1)^q,r+1,1)$ be an integer partition and let $R = S/I(\lambda)$. Assume $n \geq 4$ and either $r \geq 2$ and $q\geq 1$, or $r=1$ and $q \geq 3$. Then $T^1(R/\kk,R)_0$ is spanned by
\begin{align*}
\varphi (x_0x_i) & =  a_ix_0x_n,  \qquad 0 \leq i \leq n-r-1\\
\varphi(x_0x_i) & =  0,  \qquad  n-r \leq i \leq n-1 \\
\varphi(x_1^{q+1}) & =  b_{1}x_0x_{n}^q + \sum_{\omega \in \mathcal{T}_1}c_{\omega}x_1\omega +  \ell^{1}_{n-r}x_1^qx_{n-r} + \cdots \ell^{1}_{n}x_1^qx_{n} \\
\varphi(x_1^qx_i) & =  b_{i}x_0x_{n}^q + \sum_{\omega \in \mathcal{T}_1}c_{\omega}x_i\omega, \qquad 1 \leq i \leq n-r-1,
\end{align*}
where $a_0,\dots,a_{n-1}, b_1,\dots,b_{n-r-1},\ell^{1}_{n-r},\dots,\ell^{1}_n$ and $\{c_{\omega}\}_{\omega \in \mathcal{T}_1}$ are independent parameters.
\end{corollary}

\begin{corollary} \label{nontrivial2} Let $\lambda = (n-1,2,1)$ be an integer partition with $n 
\geq 4$ and let $R = S/I(\lambda)$. Then $T^1(R/\kk,R)_0$ is spanned by
\begin{align*}
	\varphi(x_{0}x_{i}) & =  a_{i}x_{0}x_{n}, \qquad 0 \leq i \leq n-2 \\
	\varphi(x_{0}x_{n-1}) & =  \alpha x_1x_n\\
	\varphi(x_{1}^2) & =  b_{1}x_{0}x_{n} +d_{n-1}x_{1}x_{n-1}+d_{n}x_{1}x_{n} \\
	\varphi(x_{1}x_{i}) & =  b_{i}x_{0}x_{n}, \qquad 2 \leq i \leq n-r-1,		
\end{align*}
where $\alpha,a_0,\dots,a_{n-2}, b_1,\dots,b_{n-2},d_{n-1},d_n$ 
are independent parameters.
\end{corollary}

\begin{corollary} \label{nontrivial3} Let $\lambda = (n-1,1,1)$ be an integer partition with $n \geq 3$ and let $R = S/I(\lambda)$. Then $T^1(R/\kk,R)_0$ is spanned by
\begin{align*}
	\varphi(x_{0}x_{i}) & =  a^0_{i}x_{0}x_{n}+a^1_{i}x_1x_n, \quad 0 \leq i \leq n-1 \\
	\varphi(x_{1}^2) & =  b^0_{1}x_{0}x_{n}+b^1_{1}x_1x_n, \quad 0 \leq i \leq n-1 \\
	\varphi(x_{1}x_{i}) & =  b^0_{i}x_{0}x_{n}, \, \quad \quad \quad \quad \quad 2 \leq i \leq n-1,
\end{align*}
where $a^0_i,a^1_i,b^0_{i}$ are independent parameters.
\end{corollary}


\begin{lemma} \label{T2} With notation as in \autoref{Eliahou-Kervaire}, let $F$ denote the Eliahou-Kervaire resolution of $I(\lambda)$. Let $R = S/I(\lambda)$ and let $\f_{li}^j \in \Hom(F_1,R)$ denote the dual of $\e_{li}^j$.
\begin{enumerate} 
\item[i)] If $\lambda = ((n-1)^q,r+1,1)$ then $\{x_0x_n^2\f_{0i}^j,x_0x_n^{q+1}\f_{1i}^{j}\}_{i,j} \subseteq T^2(R/\kk,R)_0$ is linearly independent.
\item[ii)] If $\lambda = (n-1,2,1)$ then $\{x_0x_n^{2}\f_{0i}^j\,x_0x_n^{2}\f_{1i}^j,x_1x_n^2\f_{0,n-1}^j\}_{i,j} \subseteq T^2(R/\kk,R)_0$ is linearly independent.
\item[iii)] If $\lambda = (n-1,1,1)$ then $\{x_0x_n^2\f_{0i}^j\,x_0x_n^2\f_{1i}^j,x_1x_n^2\f_{0i}^j\,x_1x_n^2\f_{1i}^j\}_{i,j} \subseteq T^2(R/\kk,R)_0$ is linearly independent.
\end{enumerate}
\end{lemma}
\begin{proof} We will only prove \textit{ii)} as the other two cases are analogous (and simpler). We use $A_i$ to denote the matrix associated to $\psi_i$. By construction the entries in $A_i$ are supported on $(x_0,\dots,x_{n-1})$. 
Dualizing the resolution $F$ we obtain
\begin{align*}
\psi_1^{\vee}(\f_{00}^{\star}) & =  - x_1\f_{01}^{0} -  \sum_{1<j \leq n-1} x_j\f_{0j}^{0} \\
\psi_1^{\vee}(\f_{01}^{\star}) &= x_0\f_{01}^{0}  - x_1\f_{11}^0 -\sum_{1 < j \leq n-1}x_j\f_{0j}^1 \\
\psi_1^{\vee}(\f_{0i}^{\star})& = x_0\f_{0i}^0 + x_1\f_{0i}^1 - x_1\f_{1i}^0 + \sum_{2 \leq j < i}x_j\f_{0i}^{j} - \sum_{i < 
j \leq n-1}x_j\f_{0j}^i \\
\psi_1^{\vee}(\f_{0,n-1}^{\star}) &= x_0\f_{0,n-1}^0  + x_1\f_{0,n-1}^1 + \sum_{2 \leq j < n-1}x_j\f_{0,n-1}^{j} \\
\psi_1^{\vee}(\f_{1i}^{\star}) &=  x_0\f_{0j}^{0} + \sum_{1 \leq j < i}x_j\f_{1i}^{j} - \sum_{i < j \leq n-2}x_j\f_{1j}^i.
\end{align*} 
Let us first check that $x_0x_n^2\f_{0i}^j$ and $x_0x_n^2\f_{1i}^j$ are well defined elements of $T^2(R/\kk,R)_0$. It is enough to show that $x_0x_n^2$ annihilates $\ker \psi_1 + \text{Kos}$. Since the entries in $A_2$ are supported on $(x_0,\dots,x_{n-1})$, multiplying by $x_0x_n^2$ annihilates $\psi_2(F_2) = \ker \psi_1$. Since the Koszul relations are supported on $(x_0,x_1)$, $x_0x_n^2$ annihilate $\text{Kos}$.

Since $x_1x_n^2$ also annihilates $\text{Kos}$, to show that that $x_1x_n^2\f_{0,n-1}^{j}$ is a well defined element, we only need to prove that $x_1x_n^2$ annihilates the restriction $(\ker \psi_1)|_{S(-3)\e^{j}_{0,n-1}}$. Let $\boldsymbol{v} \in \ker \psi_1$ 
and since the differentials are linear we may assume $\boldsymbol{v}$ is linear. Then $\psi_1(\boldsymbol{v})=0$ implies
\begin{equation*}
\begin{aligned}
-x_1\boldsymbol{v}_{\e_{01}^{0}} - x_2\boldsymbol{v}_{\e_{02}^{0}} - \cdots - x_{n-1}\boldsymbol{v}_{\e_{0,n-1}^{0}} = 0& \\
x_0\boldsymbol{v}_{\e_{01}^{0}}-x_1\boldsymbol{v}_{\e_{11}^{0}} - x_2\boldsymbol{v}_{\e_{02}^{1}} - \cdots - x_{n-1}\boldsymbol{v}_{\e_{0,n-1}^{1}} = 0& \\
x_0\boldsymbol{v}_{\e_{0i}^0} + x_1\boldsymbol{v}_{\e_{0i}^1} - x_1\boldsymbol{v}_{\e_{1i}^0} + \sum_{2 \leq j < i}x_j\boldsymbol{v}_{\e_{0i}^{j}} - \sum_{i < j \leq n-1}x_j\boldsymbol{v}_{\e_{0j}^i} = 0&, \quad 2 \leq i \leq n-2.
\end{aligned}
\end{equation*}
The $j$-th equation above is just the 
$j$-th row of $A_1$ multiplied with $\boldsymbol{v}$ (we can read this off from our description of $\psi_1^{\vee}$). From the $j$-th equation we can see that $\boldsymbol{v}_{\e_{0,n-1}^{j}}$ is supported on $(x_0,\dots,x_{n-2})$ for all $0 \leq j \leq n-2$. As a consequence, $x_1x_n^2$ annihilates $\boldsymbol{v}_{\e_{0,n-1}^{j}}$ and all of $(\ker \psi_1)|_{S(-3)\e^{j}_{0,n-1}}$.

We will now show that the set $\mathcal{S} = \mathrm{span}_\kk \{x_0x_n^{2}\f_{0i}^j\,x_0x_n^{2}\f_{1i}^j,x_1x_n^2\f_{0,n-1}^{j}\}_{i,j}$ is linearly independent in $T^2(R/\kk,R)$. In particular, we need to show that no non-zero element of $\mathcal{S}$ is a linear combination of the form $\sum_{l,i} c_{li}Q_{li}\overline{\psi_{1}^{\vee}}(\f_{li}^{\star})$ where $Q_{li} \in R(2)$ are quadrics and $c_{li} \in \kk$ constants. However, since all the elements of $\mathcal{S}$ are multiples of $x_n^2$ and $A_1$ does not contain the variable $x_n$, it suffices to show that no non-zero element of $\mathcal{S}$ is a linear combination of the form $\sum_{l,i} c_{li}x_n^2\overline{\psi_{1}^{\vee}}(\f_{li}^{\star})$. From the description of $\psi_1^{\vee}$ in the first paragraph we see that this is indeed the case.
\end{proof}

\section{Proof of the main theorem} \label{PROOF}

The goal of this section is to prove the main theorem of this paper. The proof will provide a description of the universal deformation space of $I(\lambda)$ valid in all characteristics.

\begin{customthm}{A} \label{THEOREM_A}
Let $\text{char}(\kk) = 0$. A Hilbert scheme  $\Hilb^{P_\lambda}(\PP^n)$ with two Borel-fixed points is either smooth, irreducible and singular, or a union of two components.
\begin{itemize}[topsep=0pt,itemsep=1ex]
\item It is smooth when 
\begin{enumerate}
\item[(i)] $\lambda = (n^s,1,1,1)$ for $n \geq 2$. For $s=0$ its  general member parameterizes three isolated points.
\item[(ii)] $\lambda = (2^s,1,1,1,1)$. For $s = 0$ its general member parameterizes four isolated points in the plane.
\item[(iii)]  $\lambda = (n^s,2^q,1)$ with and $n > 2$ and $q \geq 4$: For $s=0$ its general member parameterizes a plane curve of degree $q$ union an isolated point.
\item[(iv)] $\lambda = (n^s,(d+1)^q,1)$ with $n > d+1 > 2$ and $q \geq 2$. For $s=0$ its general member parameterizes a hypersurface of degree $q$ in a $\PP^{d+1}$ union an isolated point.
\end{enumerate}
\item It is irreducible with normal, Cohen-Macaulay singularities when
\begin{enumerate}
\item[(v)] $\lambda=(n^s,(d+1)^q,r+1,1)$ with $n > d+1  > r +1 > 2$. For $s=0$ its general member  parameterizes a hypersurface of degree $q$ in a $\PP^{d+1}$ union a $r$-plane inside $\PP^{d+1}$ and an isolated point; the hypersurface meets the $r$-plane transversely in $\PP^{d+1}$. If $d= n-2$, the Hilbert scheme at the non lexicographic point is \'etale-locally a cone over the Segre embedding $\mathbf{P}^1 \times \mathbf{P}^{n-r-1} \hookrightarrow \PP^{2(n-r)-1}$.
\item[(vi)]   $\lambda=(n^s,(d+1)^q,2,1)$ with $n > d+1  > 2$ and $q \geq 3$. The description of its general member is identical to case \textit{(v)}.
\item[(vii)] $\lambda = (n^s,d+1,1,1)$ with $n > d+1 > 1$. For $s=0$ the general member parameterizes a $d$-plane union two isolated points. If $d = n-2$ the Hilbert scheme at the non lexicographic point is \'etale-locally a cone over the Segre embedding $\mathbf{P}^2 \times \mathbf{P}^{n-1} \hookrightarrow \PP^{3n-1}$. In particular, if $n=3$ it parameterizes a line union two isolated points, is Gorenstein.
\end{enumerate}
\item It is a union of two components when
\begin{enumerate}
\item[(viii)] $\lambda = (n^s,2,2,1)$ for $n \geq 3$. The Hilbert scheme $\Hilb^{P_\lambda}(\PP^n)$ is a union of two smooth irreducible components meeting transversely. When $s=0$, the general member of one component parameterizes a plane conic union an isolated point and the general member of the other component parameterizes two skew lines.
\item[(ix)] $\lambda = (n^s,d+1,2,1)$ with $n > d+1 > 3$. The Hilbert scheme $\Hilb^{P_\lambda}(\PP^n)$ is reduced with two irreducible components $\mathcal{Y}_1$ and $\mathcal{Y}_2$. 
\begin{itemize}
\item When $s=0$ the component $\mathcal{Y}_1$ is smooth and its general member parameterizes a disjoint union of a $d$-plane union a line. If $d=n-2$ the component is isomorphic to the blowup of $\mathbf{G}(1,n) \times \mathbf{G}(n-2,n)$ along the locus $\{(L,\Lambda): L \subseteq \Lambda \}$. 
\item When $s = 0$ the component $\mathcal{Y}_2$ is normal and Cohen-Macaulay. Its general point parameterizes a $d$-plane union a line and an isolated point; the $d$-plane meets the line at a point. If $d=n-2$ the component at the non lexicographic point is \'etale-locally a cone over the Segre embedding $\mathbf{P}^1 \times \mathbf{P}^{n-2} \hookrightarrow \PP^{2(n-1)-1}$.
\end{itemize}
\end{enumerate}
\end{itemize} 
\end{customthm}

By \autoref{DETACH} and Equation \autoref{SPLIT} we may assume that $s =0$ and $n=d-2$.

\begin{proof}[Proof of \autoref{THEOREM_A} (i), (ii) and (viii)] Case \textit{(i)} and \textit{(ii)} are \cite[Theorem 2.4]{FOGARTY} while case \textit{(viii)} is \cite[Theorem 1.1]{ccn}.
\end{proof}

\begin{proof}[Proof of \autoref{THEOREM_A} (iii), (iv)] It follows from \cite[Theorem 4.1]{rs} that $\dim( \Hilb^{P_\lambda}(\PP^n))$ agrees with the dimension of the tangent space to $[I(\lambda)]$ (\autoref{tangent4}). Thus, $[I(\lambda)]$ is a smooth point on the Hilbert scheme. By Theorem \cite[Theorem 1.4]{rs} the lexicographic point is also a smooth point. Since $\Hilb^{P_\lambda}(\PP^n)$ has only two Borel-fixed points (\autoref{poly1}), \autoref{borel2} implies that the Hilbert scheme is smooth. Finally, \cite[Theorem 4.1]{rs} gives the description of the general member.
\end{proof}

\begin{proof}[Proof of \autoref{THEOREM_A} (v), (vi)] \label{fivesix} Let $\mathbf{U} = \kk[\![u_{00},\dots,u_{0,n-r-1},u_{11},\dots,u_{1n},\{u_{2,\omega}\}_{\omega \in \mathcal{T}_1}]\!]$ and let $\mathfrak{m}_{\mathbf{U}}$ denote its maximal ideal. Consider the following perturbation of $\psi_0$
\begin{align*}
\Psi_0(\e_{0i}^{\star}) & =  x_0x_i +u_{0i}x_0x_n, \quad i \leq n-r-1 \\
\Psi_0(\e_{0i}^{\star}) &= x_0x_i, \quad i \geq n-r \\  
\Psi_0(\e_{11}^{\star}) &=   x_1^{q+1} +u_{11}x_0x_n^{q} + \sum_{l=0}^{r}u_{1,n-r+l}x_1^qx_{n-r+l} 
+ \sum_{\omega \in \mathcal{T}_1} u_{2,\omega}x_1 \omega
+ \sum_{l=0}^r\sum_{\omega \in \mathcal{T}_1} u_{1,n-r+l}u_{2,\omega}x_{n-r+l}{\omega} \\
\Psi_0(\e_{1i}^{\star}) &= x_1^{q}x_i+u_{1i}x_0x_n^q + 
				\sum_{\omega \in \mathcal{T}_1} u_{2,\omega}x_i \omega,  \quad i > 1.
\end{align*}
By \autoref{nontrivial1} this lifts the first order deformation by non-trivial deformations. To perturb the syzygies, we need a few definitions. Let $\mathcal{U} := \{\omega \in \mathcal{T}_1: \text{there exists } x_i | \omega \text{ with } n-r \leq i \leq n-1\} $, $\mathcal{V} := \{\omega \in \mathcal{T}_1: \omega \text{ is supported on } x_{1},\dots,x_{n-r-1},x_n\} \setminus x_n^q$ and $\eta := x_n^q$.
Observe that  $\mathcal{T}_1 = \mathcal{U} \sqcup \mathcal{V} \sqcup \{x_n^q\}$. 

For each $\omega \in \mathcal{U}$ choose some $n-r \leq i \leq n-1$ for which $x_i | \omega$ and let $\bar{\omega} := \frac{\omega}{x_i}$ and $\widehat{\omega} := i$.  For each $\omega \in \mathcal{V}$ define the following
\begin{itemize}
\item  Let $\omega_0 = 1$ and for $1 \leq \ell \leq q $ let $\omega_{\ell}$ denote the lexicographically largest monomial of degree $\ell$ dividing $\omega$.
\item For  $0 \leq  \ell \leq q-1$ let $\lambda(\omega_{\ell})$ to be the index of the variable $\frac{\omega_{\ell+1}}{\omega_{\ell}}$.
\item For $0 \leq \ell \leq q-1$ let $u_{\omega_{\ell}} := \frac{\omega}{\omega_{\ell}} |_{\{x_j = u_{0j}\}_j}$.
\end{itemize}
For example, if $\omega = x_0^3x_3^3x_4$ then $\omega_{4} = x_0^3x_3$, then $\lambda(\omega_{3}) = x_3$ and $u_{\omega_{4}} = u_{03}^2u_{04}$. 
Define
\begin{equation*}
\Omega := \sum_{\ell=1}^{q}(-1)^{\ell-1}u_{01}^{\ell-1}x_1^{q-\ell}x_n^{\ell}\e^{\star}_{01}  
+ \sum_{\omega \in \mathcal{U}}u_{2\omega}\bar{\omega} x_n\e^{\star}_{0,\widehat{\omega}}
+ \sum_{\omega \in \mathcal{V}}u_{2\omega}\sum_{\ell=1}^{q}(-1)^{\ell-1}u_{\omega_{q-\ell+1}}\omega_{q-\ell}x_n^{\ell}\e_{0,\lambda(\omega_{q-\ell})}^{\star}.
\end{equation*}
Here is the lift of the syzygies
\begin{align*}
\Psi_1(\e_{0i}^{j}) & =  (x_j+u_{0j}x_n)\e_{0i}^{\star} - (x_i+u_{0i}x_n)\e_{0j}^{\star},& \quad 0 \leq j < i \leq n-r-1 \\ 
\Psi_1(\e_{0i}^{j}) & =  (x_j+u_{0j}x_n)\e_{0i}^{\star} - x_i\e_{0j}^{\star},& j < n-r \leq i \leq n-1 \\ 
\Psi_1(\e_{0i}^{j}) & =   x_j\e_{0i}^{\star} - x_i\e_{0j}^{\star},& \quad n-r \leq j < i \leq n-1 \\ 
\Psi_1(\e_{11}^{0}) & =  x_0\e_{11}^{\star} - x_1^q\e_{01}^{\star} - u_{11}x_n^q\e_{00}^{\star}  - \sum_{\omega \in \mathcal{T}_1} u_{2\omega}\omega \e_{01}^{\star} - \sum_{l=0}^{r-1}u_{1,n-r+l}x_1^q\e_{0,n-r+l}^{\star} 
 \\
& \quad \, \, - \sum_{l=0}^{r-1}\sum_{\omega \in \mathcal{T}_1} u_{2\omega}u_{1,n-r+l}\omega \e_{0,n-r+l}^{\star}+ (u_{01}-u_{1n})\Omega  \\
\Psi_1(\e_{1i}^{0}) & =  x_0\e_{1i}^{\star} - x_1^q\e_{0i}^{\star} - \sum_{\omega \in \mathcal{T}_1}u_{2\omega}\omega \e_{0i}^{\star}  - u_{1i}x_n^q\e_{00}^{\star}  + u_{0i}\Omega, & 2 \leq i \leq n-r-1  \\
\Psi_1(\e_{1i}^{1}) & =  x_1\e_{1i}^{\star} - x_i\e_{11}^{\star} + u_{11}x_n^q\e_{0i}^{\star} - u_{1i}x_n^q\e_{01}^{\star} +  \sum_{l=0}^{r}u_{1,n-r+l}x_{n-r+l}\e_{1i}^{\star}  \\ 
& \quad \, \, - \sum_{l=0}^{r-1} u_{1i}u_{1,n-r+l}x_n\e_{0,n-r+l}^{\star},& 2 \leq i \leq n-r-1 \\
\Psi_1(\e_{1i}^{j}) & = x_j\e_{1i}^{\star} - x_i\e_{1j}^{\star} + u_{1j}x_n^q\e_{0i}^{\star} - u_{1i}x_n^q\e_{0j}^{\star},& 2 \leq j < i \leq n-r-1.
\end{align*}
It will be notationally convenient to separate the cases $q>1$ and $q=1$. If $q > 1$, composing $\Psi_0$ and $\Psi_1$ we obtain
\begin{eqnarray} 
\Psi_0\Psi_1(\e_{0i}^j) & = & 0, \quad 0 \leq j < i \leq n-1\\
\Psi_0(\Psi_1(\e_{1i}^j)) & = & (u_{0i}u_{1j}-u_{0j}u_{1i})x_0x_n^{q+1}, \quad 2 \leq j < i \leq n-r-1 \nonumber\\
\label{deformm1} \Psi_0(\Psi_1(\e_{1i}^0)) & = & (u_{0i}(-u_{2\eta} +\alpha)-u_{00}u_{1i})x_0x_n^{q+1}, \quad 2 \leq i \leq n-r-1\\
\Psi_0(\Psi_1(\e_{11}^0)) & = & ((-u_{2\eta} +\alpha)(u_{01}-u_{1n})-u_{00}u_{11})x_0x_n^{q+1} \nonumber \\
\Psi_0(\Psi_1(\e_{1i}^1)) & =  & (u_{11}u_{0i} - u_{1i}(u_{01}-u_{1n}))x_0x_n^{q+1}, \quad 2 \leq i \leq n-r-1 \nonumber
\end{eqnarray}
with $\alpha =(-1)^{q-1}u_{01}^q + (-1)^{q-1}\sum_{\omega \in \mathcal{V}}u_{2\omega}u_{\omega_0}$.

To compute the obstruction space we just repeat the above computation $\bmod \, \mathfrak{m}_{\mathbf{U}}^{l+1}$. Indeed, for $l\geq 1$ let $\Psi_0^l = \Psi_0 \bmod \mathfrak{m}_{\mathbf{U}}^{l+1}$ and $\Psi_1^l= \Psi_1 \bmod \mathfrak{m}_{\mathbf{U}}^{l+1}$. Then the image of $\Psi_0^l\Psi_1^l$ in $T^2(R/\kk,R)_0 \otimes \mathbf{U}/\mathfrak{m}_{\mathbf{U}}^{l+2}$ is
\begin{eqnarray*}
\Psi_0^l\Psi_1^l(\e_{0i}^j) & \equiv & 0, \quad 0 \leq j < i \leq n-1 \\
\Psi_0^l(\Psi_1^l(\e_{1i}^j)) & \equiv & (u_{0i}u_{1j}-u_{0j}u_{1i})x_0x_n^{q+1}, \quad 2 \leq j < i \leq n-r-1\\
\Psi_0^l(\Psi_1^l(\e_{1i}^0)) & \equiv & (u_{0i}(-u_{2\eta} +\alpha)-u_{00}u_{1i})x_0x_n^{q+1}, \quad 2 \leq i \leq n-r-1\\
\Psi_0^l(\Psi_1^l(\e_{11}^0)) & \equiv & ((-u_{2\eta} +\alpha)(u_{01}-u_{1n})-u_{00}u_{11})x_0x_n^{q+1} \\
\Psi_0^l(\Psi_1^l(\e_{1i}^1)) & \equiv  & (u_{11}u_{0i} - u_{1i}(u_{01}-u_{1n}))x_0x_n^{q+1}, \quad 2 \leq i \leq n-r-1
\end{eqnarray*} 

Using \autoref{T2} \textit{i)}, the above equation allows us to directly read off the obstruction to lift our family from the $(l-1)$-th order to $l$-th order (beginning with $l=1$). In particular, the ideal of obstructions to lift to $q$-th order is the $2\times 2$ minors of
\begin{equation*}
\begin{pmatrix}
u_{00} & u_{01} - u_{1n} & u_{02} & u_{03} & \cdots & u_{0,n-r-1} \\
-u_{2\eta} + \alpha & u_{11} & u_{12} & u_{13} & \cdots & u_{1,n-r-1}\\
\end{pmatrix}.
\end{equation*}
If we denote this ideal by $J$, we have $\Psi_0\Psi_1 = 0$ in  $\mathbf{U}/J$ (Equation \autoref{deformm1}). Thus, $\Psi_0$ gives a versal deformation of $I(\lambda)$. Since we are working analytically, we may apply the isomorphism 
that maps $u_{2\eta} \mapsto -u_{2\eta} + \alpha$
and fixes the other variables. This transformation makes $J$ the $2 \times 2$ minors of a generic matrix. Finally, adding back the trivial deformations we obtain the universal deformation space of $I(\lambda)$. 

If $q=1$ we obtain
\begin{eqnarray*}
\Psi_0\Psi_1(\e_{0i}^j) & = & 0, \quad 0 \leq j < i \leq n-1\\
\Psi_0(\Psi_1(\e_{1i}^j)) & = & (u_{0i}u_{1j}-u_{0j}u_{1i})x_0x_n^2, \quad 2 \leq j < i \leq n-r-1\\
\Psi_0(\Psi_1(\e_{1i}^0)) & = & (u_{0i}u_{01}-u_{00}u_{1i})x_0x_n^2 , \quad 2 \leq i \leq n-r-1 \\
\Psi_0(\Psi_1(\e_{11}^0)) & = & (u_{01}(u_{01}-u_{1n})-u_{00}u_{11})x_0x_n^2\\
\Psi_0(\Psi_1(\e_{1i}^1)) & =  & (u_{11}u_{0i} - u_{1i}(u_{01}-u_{1n}))x_0x_n^2, \quad 2 \leq i \leq n-r-1.
\end{eqnarray*}
Arguing as in the $q>1$ case we see that the versal deformation space is cut out by $2\times2$ minors of
\begin{equation*}
\begin{pmatrix}
u_{00} & u_{01} - u_{1n} & u_{02} & u_{03} & \cdots & u_{0,n-r-1} \\
u_{01} & u_{11} & u_{12} & u_{13} & \cdots & u_{1,n-r-1}\\
\end{pmatrix}.
\end{equation*}

We have obtained the desired \'etale-local description as the Segre embedding $\mathbf{P}^1 \times \mathbf{P}^{n-r-1} \hookrightarrow \PP^{2(n-r)-1}$ is cut out by the ideal of $2\times 2$ minors of a generic $2 \times (n-r)$ matrix. It is well known that the Segre embedding is normal and Cohen-Macaulay \cite{HOCHSTER_EAGON}. It follows that the Hilbert scheme is normal and Cohen-Macaulay in a neighbourhood of $[I(\lambda)]$. Combining this with \cite[Theorem 1.4]{rs} and \autoref{borel2} we deduce that the Hilbert scheme is normal and Cohen-Macalay. Since the Hilbert scheme is connected \cite[Corollary 5.9]{HARTSHORNE_CONNECTED}, it must be irreducible. Finally, the description of the general member is given in \cite[Theorem 4.1]{rs} and the other statements follow from \autoref{borel2}.
\end{proof}

\begin{proof}[Proof of \autoref{THEOREM_A} (vii)] Let $\mathbf{U} =\kk[\![u_{00},\dots,u_{0,n-1},u_{11},\dots,u_{1,n-1},v_{00},\dots,v_{0,n-1},v_{11}]\!]$. For convenience we will sometimes use $u_{10}$ to denote $u_{01}$. Consider the following perturbation of $\psi_0$
\begin{eqnarray*}
\Psi_0(\e_{0i}^{\star}) & = &  x_0x_i +u_{0i}x_0x_n + v_{0i}x_{1}x_n, \quad 0 \leq i \leq n-1 \\
\Psi_0(\e_{11}^{\star}) &= &  x_1^2 +u_{11}x_0x_n + v_{11}x_1x_n\\
\Psi_0(\e_{1i}^{\star}) & = &  x_1x_i +u_{1i}x_0x_n, \quad 2 \leq i \leq n-1\\
\end{eqnarray*}
and a perturbation of $\psi_1$ 
\begin{equation*}
\begin{aligned}
\Psi_1(\e_{0i}^{0}) & = (x_0+u_{00}x_n)\e_{0i}^{\star} - (x_i+u_{0i}x_n)\e_{00}^{\star} + v_{00}x_n\e_{1i}^{\star} - v_{0i}x_n\e_{01}^{\star},& 1 \leq i \leq n-1 \\ 
\Psi_1(\e_{0i}^{j}) & = (x_j+u_{0j}x_n)\e_{0i}^{\star} - (x_i+u_{0i}x_n)\e_{0j}^{\star} + v_{0j}x_n\e_{1i}^{\star} - v_{0i}x_n\e_{1j}^{\star},& 1 \leq j <i \leq n-1 \\ 
\Psi_1(\e_{11}^{0}) & = (x_0 + v_{01}x_n)\e_{11}^{\star} - x_1\e_{01}^{\star} - u_{11}x_n\e_{00}^{\star} + (u_{01} - v_{11})x_{n}\e_{01}^{\star}	 \\
\Psi_1(\e_{1i}^{0}) & = x_0\e_{1i}^{\star} - x_1\e_{0i}^{\star} + v_{0i}x_n\e_{11}^{\star}+ u_{0i}x_n\e_{01}^{\star} - u_{1i}x_n\e_{00}^{\star},& \quad 2 \leq i \leq n-1 \\
\Psi_1(\e_{1i}^{1}) & = (x_1 + v_{11}x_n)\e_{1i}^{\star}  - x_i\e_{11}^{\star}+ u_{11}x_n\e_{0i}^{\star} - u_{1i}x_n\e_{01}^{\star},& \quad 2 \leq i \leq n-1 \\
\Psi_1(\e_{1i}^{j}) & = x_j\e_{1i}^{\star} - x_i\e_{1j}^{\star} + u_{1j}x_n\e_{0i}^{\star} - u_{1i}x_n\e_{0j}^{\star},& \quad 2 \leq j < i \leq n-1.
\end{aligned}
\end{equation*}
Composing the two we obtain 
\begin{equation*}
\begin{aligned}
\Psi_0\Psi_1(\e_{01}^0) & = (u_{11}v_{00} - u_{01}v_{01})x_0x_n^2 + (v_{01}(u_{00}-v_{01})- v_{00}(u_{01}-v_{11}))x_1x_n^2 \\
\Psi_0\Psi_1(\e_{0i}^0) & = (u_{1i}v_{00} - u_{01}v_{0i})x_0x_n^2 + (v_{0i}(u_{00}-v_{01})- u_{0i}v_{00})x_1x_n^2, & 2 \leq i \leq n-1 \\
\Psi_0\Psi_1(\e_{0i}^1) & = (u_{1i}v_{01}-u_{11}v_{0i})x_0x_n^2 + (v_{0i}(u_{01}-v_{11})-u_{0i}v_{01})x_1x_n^2,& 2 \leq i < n \\
\Psi_0\Psi_1(\e_{0i}^j) & = (u_{1i}v_{0j}-u_{1j}v_{0i})x_0x_n^2 + (u_{0j}v_{0i}-u_{0i}v_{0j})x_1x_n^2,& 2 \leq j < i < n \\
\Psi_0(\Psi_1(\e_{11}^0)) & =  (u_{01}(u_{01}-v_{11})- u_{11}(u_{00}-v_{01}))x_0x_n^2 + (u_{01}v_{01}-u_{11}v_{00})x_1x_n^2 \\
\Psi_0(\Psi_1(\e_{1i}^0)) & = (\underline{u_{11}v_{0i}+u_{01}u_{0i}-u_{1i}u_{00}})x_0x_n^2 + (\underline{u_{0i}v_{01}+v_{11}v_{0i}-u_{1i}v_{00}})x_1x_n^2,&  2  \leq  i \leq n-1 \\
\Psi_0(\Psi_1(\e_{1i}^1)) & = (u_{0i}u_{11} - u_{1i}(u_{01}-v_{11}))x_0x_n^2 +(u_{11}v_{0i}-u_{1i}v_{01})x_1x_n^2,& 2 \leq i \leq n-1 \\
%
\Psi_0(\Psi_1(\e_{1i}^j)) & = (u_{ij}u_{0i}-u_{1i}u_{0j})x_0x_n^2 + (u_{ij}v_{0i}-u_{1i}v_{0j})x_1x_n^2,& 2 \leq j < i \leq n-1.
\end{aligned}
\end{equation*}
Since the lifts $\Psi_0$ and $\Psi_1$ are first order, 
we see that the ideal of obstructions to lift to second order is the $2\times 2$ minors of 
\begin{equation*}
\begin{pmatrix}
u_{01} 		&u_{11}			& u_{12} & \cdots & u_{1,n-1}\\
v_{00} 		&v_{01}			& v_{02} & \cdots & v_{1,n-1}\\
u_{00}-v_{01} 	&u_{01} - v_{11} 	& u_{02} & \cdots & u_{0,n-1}\\
\end{pmatrix}.
\end{equation*}
Indeed, most of the minors show up as coefficients of $x_0x_n^2$ and $x_1x_n^2$. The other minors come from the underlined equations
\begin{eqnarray*}
u_{11}v_{0i}+u_{01}u_{0i}-u_{1i}u_{00} + (u_{1i}v_{00} - u_{01}v_{0i})& =& v_{01}u_{0i}-v_{0i}(u_{01}-v_{11})\\
u_{0i}v_{01}+v_{11}v_{0i}-u_{1i}v_{00} - (u_{11}v_{0i}-u_{1i}v_{01})& =& u_{01}u_{0i}-u_{1i}(u_{00}-v_{01}).
\end{eqnarray*} 
If we denote the ideal of $2\times 2$ minors by $J$ we have $\Psi_0\Psi_1 = 0$ in  $\mathbf{U}/J$. Thus, $\Psi_0$ gives a versal deformation of $I(\lambda)$. Adding back the trivial deformations gives us the universal deformation space of $I(\lambda)$. This gives us the desired \'etale-local description as the Segre embedding $\mathbf{P}^2 \times \mathbf{P}^{n-1} \hookrightarrow \PP^{3n-1}$ is cut out by the ideal of $2\times 2$ minors of a generic $3 \times n$ matrix. Similar to the previous proof, the other statements follow from \cite{HOCHSTER_EAGON}, \cite[Corollary 5.9]{HARTSHORNE_CONNECTED}, \autoref{borel} and \cite[Theorem 4.1]{rs}. 
\end{proof}

\begin{proof}[Proof of \autoref{THEOREM_A} (ix)] Let $\mathbf{U} = \kk[\![u_{00},\dots,u_{0,n-1},u_{11},\dots,u_{1n}]\!]$ and let $\mathfrak{m}_{\mathbf{U}}$ denote its maximal ideal. We will sometimes use $\e_{10}^{\star}$ to denote $\e_{01}^{\star}$. This does not cause any confusion as $\e_{10}^{\star}$ is not part of a basis of $F_0$. Consider the following perturbation of $\psi_0$ 
\begin{align*}
\Psi_0(\e_{00}^{\star})&=   x_0^2 + u_{00}x_0x_n \\
\Psi_0(\e_{01}^{\star}) & =   x_0x_1 +u_{01}x_0x_n - u_{0,n-1}u_{1,n-1}x_{1}x_n \\
\Psi_0(\e_{0i}^{\star}) & =   x_0x_i +u_{0i}x_0x_n,&  2 \leq i \leq n-2 \\
\Psi_0(\e_{0,n-1}^{\star}) &= x_0x_{n-1} + u_{0,n-1}x_1x_n \\
\Psi_0(\e_{11}^{\star}) &=   x_1^2 +u_{11}x_0x_n + u_{1,n-1}x_1x_{n-1} + u_{1n}x_1x_{n}  \\
\Psi_0(\e_{1i}^{\star}) &= x_1x_i+u_{1i}x_0x_n,& 2 \leq i \leq n-2.
\end{align*}
and a perturbation of $\psi_1$ 
\begin{align*}
\Psi_1(\e_{01}^{0})&= (x_0+ u_{00}x_n)\e_{01}^{\star} - (x_1  + u_{01}x_n)\e_{00}^{\star} +u_{0,n-1}u_{1,n-1}x_n\e_{01}^{\star}&\\
\Psi_1(\e_{0i}^{0})& = (x_0 + u_{00}x_n)\e_{0i}^{\star}- (x_i + u_{0i}x_n)\e_{00}^{\star},&  2 \leq i \leq n-2  \\
\Psi_1(\e_{0i}^{1})& = (x_1+u_{01}x_n)\e_{0i}^{\star} - (x_i+u_{0i}x_n)\e_{01}^{\star}  - u_{0,n-1}u_{1,n-1}x_n\e_{1i}^{\star},&  2 \leq  i \leq n-1\\
\Psi_1(\e_{0i}^{j})& = (x_j+u_{0j}x_n)\e_{0i}^{\star} - (x_i+u_{0i}x_n)\e_{0j}^{\star},&  2 \leq j < i \leq n-2 \\
\Psi_1(\e_{0,n-1}^{j})& = (x_j+u_{0j}x_n)\e_{0,n-1}^{\star} - x_{n-1}\e_{0j}^{\star} -u_{0,n-1}x_n\e_{1j}^{\star},&  0 \leq j \leq n-2\\ 
\Psi_1(\e_{11}^{0})& = x_0\e_{11}^{\star} - x_1\e_{01}^{\star} - u_{11}x_n\e_{00}^{\star} -u_{1,n-1}x_1\e_{0,n-1}^{\star}+ (u_{01}-u_{1n})x_n\e_{01}^{\star} \\
\Psi_1(\e_{1i}^{0})& = x_0\e_{1i}^{\star} - x_1\e_{0i}^{\star} + u_{0i}x_n\e_{01}^{\star} - u_{1i}x_n\e_{00}^{\star},&  2 \leq i \leq n-2\\
\Psi_1(\e_{1i}^{1})& = x_1\e_{1i}^{\star} - x_i\e_{11}^{\star} + u_{11}x_n\e_{0i}^{\star} - u_{1i}x_n\e_{01}^{\star} \\
&+ (u_{1,n-1}x_{n-1}+u_{1n}x_n)\e_{1i}^{\star} - u_{1i}u_{1,n-1}x_n\e_{0,n-1}^{\star},&  2 \leq i \leq n-2 \\
\Psi_1(\e_{1i}^{j})& = x_j\e_{1i}^{\star} - x_i\e_{1j}^{\star} + u_{1j}x_n\e_{0i}^{\star} - u_{1i}x_n\e_{0j}^{\star},& 2 \leq j < i \leq n-2.
\end{align*}

For $l \geq 1$ let,  $\Psi^l_0 \equiv \Psi^l_0 \bmod \mathfrak{m}_{\mathbf{U}}^{l+1}$ and $\Psi^l_1 \equiv \Psi^l_1 \bmod \mathfrak{m}_{\mathbf{U}}^{l+1}$. As done previously, the obstruction to lifting to second order is the image of $\Psi_0^{1}\Psi_{1}^1$ in $T^2(R/\kk,R)_0 \otimes \mathfrak{m}_{\mathbf{U}}^2/\mathfrak{m}_{\mathbf{U}}^3$. This is
\begin{align*}
\Psi_0^1\Psi_1^1(\e_{0i}^j) &\equiv  0 ,& \quad 0 \leq j < i \leq n-2\\
\Psi_0^1\Psi_1^1(\e_{0,n-1}^0)  & \equiv  - u_{01}u_{0,n-1}x_0x_n^2 + u_{00}u_{0,n-1}x_1x_n^2  \\
\Psi_0^1\Psi_1^1(\e_{0,n-1}^{1})  & \equiv   u_{0,n-1}(u_{01}-u_{1n})x_1x_n^2 - u_{0,n-1}u_{11}x_0x_n^2 \\ 
& \quad -\underline{u_{0,n-1}u_{1,n-1} x_1x_{n-1}x_n}\\
\Psi_0^1\Psi_1^1(\e_{0,n-1}^{j})  &\equiv   u_{0j}u_{0,n-1}x_1x_n^2 - u_{0,n-1}u_{1j}x_0x_n^2,& \quad 2 \leq j \leq n-2 \\
\Psi_0^1\Psi_1^1(\e_{11}^0)  &\equiv   (u_{01}(u_{01}-u_{1n})-u_{00}u_{11})x_0x_n^2 \\
&\quad - \underline{u_{0,n-1}u_{1,n-1}x_1^2x_n} \\
\Psi_0^1\Psi_1^1(\e_{1i}^0)  &\equiv   (u_{01}u_{0i}-u_{00}u_{1i})x_0x_n^2,& \quad 2 \leq i \leq n-2 \\
\Psi_0^1\Psi_1^1(\e_{1i}^1)  & \equiv  (u_{0i}u_{11}-u_{1i}(u_{01}-u_{1n}))x_0x_n^2 \\
& \quad + \underline{u_{1,n-1}u_{1i}x_0x_{n-1}x_n}, &\quad 2 \leq i \leq n-2 \\
\Psi_0^1\Psi_1^1(\e_{1i}^{j})  & \equiv   (u_{0i}u_{1j}- u_{0j}u_{1i})x_0x_n^2, &\quad 2 \leq j < i \leq n-2.
\end{align*}
In this image, the three underlined terms are $0$. Indeed, the second and third underlined term (from the top) are $0$ in $R$ and the first term is equal to $\overline{\psi_1^{\vee}}(u_{0,n-1}u_{1,n-1}x_1x_n\f_{01}^{\star})$. After the underlined terms vanish, $\Psi_0^1\Psi_1^1$ is written in terms of our desired basis elements (\autoref{T2} \textit{ii)}). Thus, the ideal generated by the coefficients, which we denote by $J_1$, is the ideal of of obstructions to lift to second order. Let $\mathbf{U}^1 = \mathbf{U}/J_1$ and $\mathfrak{m}_{{\mathbf{U}}^1}$ its maximal ideal. To compute the the obstructions to third order we compute $\Psi^2_0\Psi^2_1$ in $T^2(R/\kk,R)_0 \otimes \mathfrak{m}_{\mathbf{U}^1}^3/\mathfrak{m}_{\mathbf{U}^1}^4$. This is 
\begin{align*}
\Psi_0^2\Psi_1^2(\e_{0i}^j) &\equiv 0, \quad (i,j) \ne (0,n-1) \\
\Psi_0^2\Psi_1^2(\e_{0,n-1}^0)  &\equiv   u_{0,n-1}^2u_{1,n-1}x_1x_n^2  \\
\Psi_0^2\Psi_1^2(\e_{1i}^j)  &\equiv  0, \quad \text{ for all } j,i \\
\end{align*}
Thus, the ideal of obstructions to lift to third order is
\begin{align*}
J_2 & := \left((u_{0,n-1}) + I_{2}
\begin{pmatrix}
u_{00} & u_{01} - u_{1n} & u_{02} & u_{03} & \cdots & u_{0,n-2} \\
u_{01} & u_{11} &  u_{12} & u_{13} & \cdots & u_{1,n-2}\\
\end{pmatrix}
\right)
\cap  \\
& \qquad (u_{00}+u_{0,n-1}u_{1,n-1},u_{01},u_{02},\dots,u_{0,n-2}, u_{11},u_{12},\dots,u_{1,n-2},u_{1n}).
\end{align*}
Here $I_2(-)$ denotes the ideal of the $2\times 2$ minors of $-$. Finally, it is easy to see that $\Psi_0\Psi_1 = 0$ in $\mathbf{U}/J_2$ (for instance, the underlined terms in $\Psi^1_0\Psi^1_1$ are cancelled by the second order terms). Thus $\Psi_0$ gives a versal deformation of $I(\lambda)$. Adding back the trivial deformations gives us the universal deformation space of $I(\lambda)$.

From \autoref{tangent2} and \autoref{nontrivial2} we see that there are $4n-6$ trivial deformations; denote them by $t_1,\dots,t_{4n-6}$. Thus, the smooth component of $\Spec(\mathbf{U}[t_{1},\dots,t_{4n-6}]/J_2)$ has dimension $4n-4$. Since $P_\lambda = \binom{t+n-2}{n-2}+t+1$, there is an irreducible component, $\mathcal{Y}_1$, whose general member parameterizes a line and a disjoint $(n-2)$-plane. This is birational to $\mathbf{G}(1,n) \times \mathbf{G}(1,n-2)$ and, as a consequence, has dimension $4n-4$; thus $\mathcal{Y}_1$ is the smooth component. It is shown in \cite[Theorem B]{ritvik} that  $\mathcal{Y}_1$ is isomorphic to a blow up of $\mathbf{G}(1,n) \times \mathbf{G}(n-2,n)$ along the locus $\{(L,\Lambda): L \subseteq \Lambda \}$. Similar to the previous proofs, the other statements follow from \cite{HOCHSTER_EAGON}, \cite[Corollary 5.9]{HARTSHORNE_CONNECTED}, \autoref{borel} and \cite[Theorem 4.1]{rs}. 
\end{proof}


\section{Hilbert schemes with three Borel-fixed points} \label{threeborelpoints}
The goal of this section is to collect various Hilbert schemes with three Borel-fixed points that have appeared in the literature. In contrast with \autoref{THEOREM_A}, we show that these Hilbert schemes can have three irreducible components and that the components can meet each other in different ways; see \autoref{THREE_1} and  \autoref{THREE_2}. For simplicity, we assume $\text{char}(\kk) = 0$. 

\subsection{Plane curves and two isolated points} Let $\lambda = (2^q,1,1)$ with $q \geq 5$. The Hilbert scheme $\Hilb^{P_\lambda}(\PP^3)$ is irreducible and singular with general member parameterizing a plane curve of degree $q$ union $2$ isolated \break points \cite[Theorem 4.2, Remark 4.4]{cn}. Using \autoref{moo} one can check that its Borel-fixed points are 
$(x_0) +x_1^{d}(x_1,x_2^2)$,
$x_0(x_0,x_1,x_2) + x_1^{q}(x_1,x_2)$,
and
$x_0(x_0,x_1,x_2^2) + (x_1^q)$.

\subsection{Linear space and three isolated points} 
For $\lambda = (n-1,1,1,1)$ the Hilbert scheme $\Hilb^{P_\lambda}(\PP^n)$ is irreducible and singular with general point parameterizing an $(n-2)$-plane union $3$ isolated points \cite[Theorem 3.9]{cn}. Using \autoref{moo} one can check that it its Borel-fixed points are
$
(x_0) + x_1(x_1,\dots,x_{n-2},x_{n-1}^3)$,
$(x_0) + x_1(x_1,\dots,x_{n-3}) +x_1(x_{n-2}^2,x_{n-2}x_{n-1},x_{n-1}^2)$,
and
$x_0(x_0,\dots,x_{n-1}) + x_1(x_1,\dots,x_{n-2},x_{n-1}^2).$

\subsection{Five points in $\PP^2$} The Hilbert scheme $\Hilb^{5}(\PP^2)$ is smooth and has three Borel-fixed points given by
$(x_0,x_1^5)$,
$(x_0^2,x_0x_1,x_1^4)$,
and 
$(x_0^2,x_0x_1^2,x_1^3)$.

\subsection{Four points in $\PP^3$} The Hilbert scheme $\Hilb^{4}(\PP^3)$ is singular and Gorenstein \cite{sk}, and has three Borel-fixed points given by
$(x_0,x_1,x_2^4)$,
$(x_0,x_1x_2,x_1^2,x_2^3)$,
and $ (x_0,x_1,x_2)^2$.

\subsection{Conic union two isolated points} \label{THREE_1} Let $\lambda =(2,2,1,1)$. It is shown in \cite[Example 4.6c]{cn} that the Hilbert scheme $\Hilb^{P_\lambda}(\PP^3)$ has three irreducible components. They are 
\begin{itemize}
\item $\mathcal{Y}_1$, with general point parameterizing doubled lines of genus $-2$ with no embedded points,
\item $\mathcal{Y}_2$, with general point parameterizing two skew lines union an isolated point,
\item $\mathcal{Y}_3$, with general point parameterizing a conic union two isolated points.
\end{itemize}
Using \autoref{moo} we obtain
$(x_0,x_1^3,x_1^2x_2^2)$,
$(x_0^2,x_0x_1,x_0x_2,x_1^2x_2,x_1^3)$,
and 
$I:= (x_0^2,x_0x_1,x_1^2,x_0x_2^2)$ 
as the Borel-fixed ideals

We will now show that all three components contain $I$. For any $t \in \mathbf{A}^1 - 0$ the ideal $I_t = (x_0^2,x_0x_1,x_1^2, \break x_0x_2^2-tx_1x_3^2)$ lies in $\mathcal{Y}_1$ \cite[Proposition 1.4]{TRIPLE_STRUCTURES}. Thus the limit, $\lim_{t \to 0} I_t = I$, also lies in $\mathcal{Y}_1$. 
By \cite[Theorem 1.1]{ccn}, the ideal $J = (x_0^2,x_0x_1,x_1^2,x_0x_2)$ lies in the intersection of the component parameterizing two skew lines and the component parameterizing a plane conic union an isolated point. It follows that for $t \ne 0$ the ideal
\begin{equation*}
J_t := (x_0^2-x_0x_2,x_0x_1,x_1^2,x_0x_2^2+tx_0x_2x_3) = (x_0^2,x_0x_1,x_1^2,x_0x_2) \cap (x_1,x_2+tx_3,x_0-x_2).
\end{equation*}
lies  in $\mathcal{Y}_1 \cap \mathcal{Y}_2$.Thus, the limit $(x_0^2-x_0x_2,x_0x_1,x_1^2,x_0x_2^2)$ is in $\mathcal{Y}_1 \cap \mathcal{Y}_2$. Finally, considering the limit of $(x_0^2-tx_0x_2,x_0x_1,x_1^2,x_0x_2^2)$ we see that $[I] \in \mathcal{Y}_1 \cap \mathcal{Y}_2$. 

\subsection{Quadric $d$-fold, a line and a point} \label{THREE_2} Let $\lambda = (d+1,d+1,2,1)$ and $d \geq 2$. The Hilbert scheme $\Hilb^{P_\lambda}(\PP^n)$ has three Borel-fixed points and they are given by
\begin{align*}
& (x_0,\dots,x_{n-d-3}) + x_{n-d-2}(x_{n-d-2},\dots,x_{n-1}) + x_{n-d-1}^2(x_{n-d-1},\dots,x_{n-2}),  \\
&(x_0,\dots,x_{n-d-2}) + x_{n-d-1}^{2}(x_{n-d-1},\dots,x_{n-3}) + x_{n-d-1}^2x_{n-2}(x_{n-2},x_{n-1}), \text{ and }  \\
& (x_0,\dots,x_{n-d-3}) + x_{n-d-2}(x_{n-d-2},\dots,x_{n-2}) + (x_{n-d-1}^2).
\end{align*}
When $d=2$ this Hilbert scheme has three irreducible components, denoted by $H_4,H_4'$ and $H_4''$, such that $H_4 \cap H_4'' = \emptyset$ and $H_4' \cap H_4, H_4' \cap H_4'' \ne \emptyset$ \cite[Proposition 2.7]{ccn}. 


\begin{acknowledgement} I would like to thank Michael Christianson, David Eisenbud and Alessio Sammartano for helpful conversations. I would also like to thank the referees for a thorough reading and suggesting numerous improvements; especially those that simplified Section 2.
\end{acknowledgement}

\end{document}